\DeclareMathOperator{\Ho}{Ho}
\DeclareMathOperator{\Hom}{Hom}
\DeclareMathOperator{\Ext}{Ext}
\DeclareMathOperator{\sset}{sSet}
\DeclareMathOperator{\Ch}{Ch}
\DeclareMathOperator{\Sp}{Sp}
\DeclareMathOperator{\map}{map}
\DeclareMathOperator{\holim}{holim}
\DeclareMathOperator{\CAT}{CAT}
\DeclareMathOperator{\Sect}{Sect}
\DeclareMathOperator{\Post}{Post}
\DeclareMathOperator{\Bou}{Bou}
\DeclareMathOperator{\Tow}{Tow}
\DeclareMathOperator{\Chrom}{Chrom}
\DeclareMathOperator{\Fibprod}{Fibpr}
\DeclareMathOperator{\Hofib}{Fib}
\DeclareMathOperator{\fib}{\it fib}
\newcommand{\C}{\mathcal{C}}
\newcommand{\G}{\mathcal{G}}
\newcommand{\K}{\mathcal{K}}
\newcommand{\calS}{\mathcal{S}}
\newcommand{\fin}{\textit{fin}}
\newcommand{\arr}{{\rm Arr}}
\newcommand{\weq}{{\rm weq}}
\theoremstyle{plain}
\newtheorem{theorem}{Theorem}[section]
\newtheorem{proposition}[theorem]{Proposition}
\newtheorem{corollary}[theorem]{Corollary}
\newtheorem{lemma}[theorem]{Lemma}
\newtheorem*{theorem*}{Theorem}
\theoremstyle{definition}
\newtheorem{definition}[theorem]{Definition}
\theoremstyle{remark}
\newtheorem{rmk}[theorem]{Remark}
\newcommand{\lradjunction}{\,\,\raisebox{-0.1\height}{$\overrightarrow{\longleftarrow}$}\,\,}
\begin{document}

\title[Towers and fibered products of model structures]{Towers and fibered products \\of model structures}
\author[J.J. Guti\'errez]{Javier J. Guti\'errez}
\address{Radboud Universiteit Nijmegen, Institute for
Mathematics, Astrophysics, and Particle Physics, Heyendaalseweg 135, 6525 AJ
Nijmegen, The Netherlands} \email{j.gutierrez@math.ru.nl}
\urladdr{http://www.math.ru.nl/~gutierrez}
\author[C. Roitzheim]{Constanze Roitzheim}
\address{University of Kent, School of Mathematics, Statistics and Actuarial
Science, Canterbury, Kent CT2 7NF, United Kingdom}
\email{C.Roitzheim@kent.ac.uk}
\urladdr{http://www.kent.ac.uk/smsas/personal/csrr}
\thanks{The first author was supported by the NWO (SPI 61-638) and the MEC-FEDER grants MTM2010-15831 and MTM2013-42178-P.
Both authors received support from the LMS Scheme~4 grant no. ~41360.}
\keywords{Localization; model category; Postnikov tower; homotopy fibered
product; homotopy pullback} \subjclass[2010]{55P42, 55P60, 55S45}

\begin{abstract}
Given a left Quillen presheaf of localized model structures, we study the homotopy limit model structure on the associated category of sections. We focus specifically on towers and fibered products (pullbacks) of model categories. As applications we consider Postnikov towers of model categories, chromatic towers of spectra and Bousfield arithmetic squares of spectra. For stable model categories, we show that the homotopy fiber of a stable left Bousfield localization is a stable right Bousfield localization.
\end{abstract}

\maketitle

\section*{Introduction}
Localization techniques play an important role in modern homotopy theory. For
several applications it is often useful to approximate a given space or
spectrum by simpler ones by means of localization functors. For instance,
given a simplicial set $X$, one can consider its Postnikov tower.
This tower can be built as a sequence of fibrations
$$
\cdots\stackrel{f_{n}}{\longrightarrow} P_nX\stackrel{f_{n-1}}{\longrightarrow} P_{n-1}X\stackrel{f_{n-2}}{\longrightarrow}\cdots\stackrel{f_{2}}{\longrightarrow} P_2 X \stackrel{f_{1}}{\longrightarrow}
P_1X\stackrel{f_{0}}{\longrightarrow}  P_0X
$$
and maps $p_n\colon X\to P_n X$ satisfying that $p_n=f_n\circ p_{n+1}$ for
every $n\ge 0$, and that $\pi_k(f_n)\colon \pi_k(X)\cong \pi_k(P_nX)$ if
$k\le n$ for any choice of base point of $X$, and $\pi_k(P_nX)=0$ if $k>n$ and all choices of base points.

Each of the spaces $P_n X$ can be built as a localization of $X$ with respect to the map $S^{n+1}\to *$,
and $p_n$ is the corresponding localization map. If $X$ is connected, then the fiber of $f_{n-1}$ is an
Eilenberg--Mac\,Lane space $K(\pi_n(X), n)$ and every simplicial set $X$ can be
reconstructed as the homotopy limit of its Postnikov tower $X\simeq \holim_
{n\ge 0} P_nX$; see~\cite[Ch.VI, Theorem 3.5]{GJ99}.

In the category of spectra, given any spectrum $E$, we can consider its
associated homological localization functor $L_E$ which inverts the maps that
induce isomorphisms in $E_*$-homology in a universal way. Given an abelian
group $G$, let us denote by $MG$ the associated Moore spectrum. It is
well-known that any spectrum $X$ can be built, using Bousfield's arithmetic
square~\cite{Bou79}, as a homotopy pullback of the diagram of homological
localizations
\[
L_{M\mathbb{Z}_J}X \longrightarrow L_{M\mathbb{Q}}X \longleftarrow
L_{M\mathbb{Z}_K}X,
\]
where, $J$ and $K$ form any partition of the set of prime numbers and
$\mathbb{Z}_J$ are the integers localized at the set of primes $J$.

Furthermore, the chromatic convergence theorem \cite[Theorem 7.5.7]{Rav92}
states that a finite $p$-local spectrum $X$ is the homotopy limit of its
chromatic localizations $L_{E(n)}X$ at the prime $p$.

The aim of this paper is to present categorified versions of these statements
in the framework of Quillen model structures.  Given a diagram (left Quillen
presheaf) of model categories $F\colon \mathcal{I}^{\rm op}\to \CAT$, there
is an injective model structure on the category of sections associated to
$F$, which we can further colocalize in order to obtain the homotopy limit
model structure. We study these model structures for towers and homotopy
fibered products (homotopy pullbacks) of model categories.

Firstly, we construct the Postnikov tower of an arbitrary combinatorial model
category. As an application we show that for simplicial sets and for bounded
below chain complexes these towers converge in a certain sense. Another tower
model structure is the homotopy limit model structure on the left Quillen
presheaf of chromatic towers $\Chrom(\Sp)$, where $\Sp$ denotes here the category of $p$-local symmetric spectra. We show that the Quillen
adjunction
\[
{\rm const}: \Sp \lradjunction \Chrom(\Sp): \lim
\]
induces a composite
\[
\Ho(\Sp)^{\fin} \xrightarrow{\mathbb{L}{\rm const}} \Ho(\Chrom(\Sp))^F
\xrightarrow{\holim} \Ho(\Sp)^{\fin}
\]
which is isomorphic to the identity. (Here, $F$ and $\fin$ denote suitable
finiteness conditions.) This set-up is a step towards deeper insights into
the structure of the stable homotopy category via viewing chromatic
convergence in a categorified manner.

We then move to fibered products of model categories. Using this set-up, we
show that the category of symmetric spectra is Quillen equivalent to
the homotopy limit model structure of the left Quillen presheaf for Bousfield
arithmetic squares of spectra.

As a final application we focus on a correspondence between the homotopy
fiber of a left Bousfield localization $\C \to L_\mathcal{S}\C$ and certain right
Bousfield localizations. This is then used, among other examples, to
understand the layers of the Postnikov towers established earlier, and to
study the correspondence between stable localizations and stable
colocalizations.

\bigskip

\noindent{\bf Acknowledgements.} The first author would like to thank Dimitri
Ara for many useful conversations on some of the topics of this paper, and
Ieke Moerdijk for suggesting the idea of studying towers of localized model
structures. The second author would like to thank David Barnes for motivating
discussions and the Radboud Universiteit Nijmegen for their hospitality. Both authors
thank the referee and the associate editor for very useful comments that helped improving
the contents and presentation of the paper.

\section{Model structures for sections of Quillen presheaves}
In this section we recall the injective model structure on the category of
sections of diagrams of model categories. We will state the existence of this
model structure in general, although we will be mainly interested in the
cases of sections of towers and fibered products of model categories. Details
about these model structures can be found in~\cite[Section 2, Application
II]{Bar10}, \cite{Ber11}, \cite{Ber12}, \cite[Section 3]{GS13} and
\cite[Section 4]{Toe06}.

Let $\mathcal{I}$ be a small category. A \emph{left Quillen presheaf on
$\mathcal{I}$} is a presheaf of categories $F\colon \mathcal{I}^{{\rm op}}\to
\CAT$ such that for every $i$ in $\mathcal{I}$ the category $F(i)$ has a
model structure, and for every map $f\colon i\to j$ in $\mathcal{I}$ the
induced functor $f^*\colon F(j)\to F(i)$ has a right adjoint and they form a
Quillen pair.

\begin{definition}
A \emph{section} of a left Quillen presheaf $F\colon \mathcal{I}^{\rm op}\to
\CAT$ consists of a tuple $X=(X_i)_{i\in\mathcal{I}}$, where each $X_i$ is in
$F(i)$, and, for every morphism $f\colon i\to j$ in $\mathcal{I}$, a morphism
$\varphi_f\colon f^* X_j\to X_i$ in $F(i)$ such that the diagram
$$
\xymatrix{
(g\circ f)^*X_k\ar[r]^-{\varphi_{g\circ f}}\ar[d]_{f^*\varphi_g} & X_i \\
f^*X_j \ar[ur]_{\varphi_f}&
}
$$
commutes for every pair of composable morphisms $f\colon i\to j$ and $g\colon
j\to k$.

A \emph{morphism of sections} $\phi\colon (X,\varphi)\to (Y,\varphi')$ is
given by morphisms $\phi_i\colon X_i\to Y_i$ in $F(i)$ such that the diagram
$$
\xymatrix{
f^* X_j \ar[r]^{f^*\phi_j}\ar[d]_{\varphi_f} & f^* Y_j\ar[d]^{\varphi'_f} \\
X_i \ar[r]_{\phi_i} & Y_i
}
$$
commutes for every morphism $f\colon i\to j$ in $\mathcal{I}$.

A section $(X, \varphi)$ is called \emph{homotopy cartesian} if for every
$f\colon i\to j$ the morphism $\varphi_f\colon f^*Q_jX_j\to X_i$ is a weak
equivalence in $F(i)$, where $Q_j$ denotes a cofibrant replacement functor in $F(j)$.

\end{definition}

Recall that a model category is \emph{left proper} if pushouts of weak
equivalences along cofibrations are weak equivalences, and \emph{right
proper} if pullbacks of weak equivalences along fibrations are weak
equivalences. A model category is \emph{proper} if it is left and right
proper.

The category of sections admits an
\emph{injective} model structure, which is left or right proper, if the involved model structures are left or
right proper, respectively. A proof of the following statement can be found in \cite[Theorem~2.30 and Propostion~2.31]{Bar10}.
Recall that a model category is called
\emph{combinatorial} if it is cofibrantly generated and the underlying
category is locally presentable. Foundations of the theory of combinatorial
model categories may be found in \cite{Beke}, \cite{Dug01} and \cite{Lu}. The
essentials of the theory of locally presentable categories can be found in
\cite{AR}, \cite{GU} or~\cite{MP}.

\begin{theorem}[Barwick]\label{thm:sect_model_str}
Let $F\colon \mathcal{I}^{\rm op}\to \CAT$ be a left Quillen presheaf such
that $F(i)$ is combinatorial for every $i$ in $\mathcal{I}$. Then there
exists a combinatorial model structure on the category of sections of $F$,
denoted by $\Sect(\mathcal{I}, F)$ and called the \emph{injective model
structure}, such that a morphism of sections $\phi$ is a weak equivalence or
a cofibration if and only if $\phi_i$ is a weak equivalence or a cofibration in $F(i)$
for every $i$ in $\mathcal{I}$, respectively. Moreover, if $F(i)$ is left or
right proper for every $i\in \mathcal{I}$, then so is the model structure on
$\Sect(\mathcal{I}, F)$. \qed
\end{theorem}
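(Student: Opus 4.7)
My plan is to deduce the theorem from J.\ Smith's recognition theorem for combinatorial model structures: given a locally presentable category $\mathcal{E}$, an accessibly embedded class $\mathcal{W}$ of weak equivalences satisfying 2-out-of-3 and closure under retracts, and a set $I$ of morphisms such that every map with the right lifting property against $I$ lies in $\mathcal{W}$ and $\mathcal{W}\cap\mathrm{cof}(I)$ is closed under pushouts and transfinite compositions, there is a combinatorial model structure on $\mathcal{E}$ with cofibrations $\mathrm{cof}(I)$ and weak equivalences $\mathcal{W}$. The first step is to observe that $\Sect(\mathcal{I},F)$ is locally presentable: it can be realized as an accessibly embedded subcategory of the Grothendieck construction $\int F$ cut out by equations on the structure morphisms $\varphi_f$, and since each $F(i)$ is combinatorial (hence locally presentable) this construction preserves local presentability; moreover, limits and colimits in $\Sect(\mathcal{I},F)$ are computed pointwise.

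I take $\mathcal{W}$ to be the pointwise weak equivalences and the cofibrations to be the pointwise cofibrations. The class $\mathcal{W}$ inherits 2-out-of-3 and closure under retracts pointwise, and is accessibly embedded in $\arr(\Sect(\mathcal{I},F))$ because the weak equivalences in each combinatorial $F(i)$ are accessible and accessibly embedded and these properties pass through the finite-limit construction that assembles them. The main obstacle is the generating set $I$: unlike in the projective case, pointwise cofibrations do not arise from a small set via levelwise left adjoints. The standard device, which I would follow, is accessibility: fix a regular cardinal $\kappa$ larger than $|\mathcal{I}|$ and than the presentability rank of every generating (trivial) cofibration in every $F(i)$, and let $I$ be a set of representatives of all pointwise cofibrations between $\kappa$-presentable sections. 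Writing any pointwise cofibration as a $\kappa$-filtered colimit of such yields that $\mathrm{cof}(I)$ agrees with the class of pointwise cofibrations, and Smith's remaining conditions then reduce to pointwise statements that follow from the model structure on each $F(i)$.

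For properness, cofibrations, weak equivalences, pushouts and pullbacks are all computed pointwise, so left properness descends immediately from the $F(i)$. For right properness one first checks that every injective fibration is a pointwise fibration; this is obtained by extending a generating trivial cofibration $j\colon A\to B$ in any $F(i_0)$ to a pointwise trivial cofibration of sections via the left adjoint to evaluation at $i_0$, so that the right lifting property against injective trivial cofibrations forces the right lifting property against $j$ pointwise. Right properness then descends from the $F(i)$ to $\Sect(\mathcal{I},F)$.
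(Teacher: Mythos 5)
The paper offers no proof of this theorem beyond citing Barwick, and your sketch reconstructs essentially that cited argument: local presentability of the section category, Smith's recognition theorem with pointwise weak equivalences and cofibrations, accessibility of the pointwise weak equivalences, a generating set of cofibrations extracted by a cardinality bound, and the left adjoint to evaluation to see that injective (trivial) fibrations are pointwise, whence properness. The only step you compress too far is the identification of $\mathrm{cof}(I)$ with all pointwise cofibrations: expressing a pointwise cofibration as a $\kappa$-filtered colimit of maps between $\kappa$-presentable sections is not by itself enough, since weak saturation is closed under transfinite compositions of pushouts rather than arbitrary colimits in the arrow category, so one needs the standard ``good filtration'' lemma (an accessible, accessibly embedded weakly saturated class in a locally presentable category is generated by a set), which is exactly what Barwick and Lurie supply.
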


Now, in order to model the homotopy limit of a left Quillen
presheaf, we would like to construct a model structure on the category of
sections whose cofibrant objects are precisely the levelwise cofibrant
homotopy cartesian sections. This will be done by taking a right Bousfield
localization of  $\Sect(\mathcal{I}, F)$. The resulting model structure will
be called the \emph{homotopy limit model structure}.

The existence of the homotopy limit model structure when the category $\Sect(\mathcal{I}, F)$
is right proper was proved in \cite[Theorem 3.2]{Ber12}. Without any properness assumptions,
the homotopy limit model structure exists as a \emph{right} model
structure, as proved in~\cite[Theorem~5.25]{Bar10}.  It follows directly from
those results that if $F(i)$ is right proper for every $i$ in $\mathcal{I}$,
then we get a \emph{full} model structure. For the reader's convenience we
spell this out in a little more detail.

\begin{theorem}
\label{thm:holim_model_str} Let $F\colon \mathcal{I}^{\rm op}\to \CAT$ be a
left Quillen presheaf such that $F(i)$ is right proper and combinatorial for
every $i$ in $\mathcal{I}$. Then there exists a combinatorial model structure
on the category of sections of $F$, called the \emph{homotopy limit model
structure}, with the same fibrations as $\Sect(\mathcal{I}, F)$ and whose
cofibrant objects are the sections that are cofibrant in $\Sect(\mathcal{I},
F)$ and homotopy cartesian.
\end{theorem}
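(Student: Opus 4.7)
The plan is to obtain the homotopy limit model structure as a right Bousfield localization of the injective model structure on $\Sect(\mathcal{I}, F)$ provided by Theorem~\ref{thm:sect_model_str}. Under the present hypotheses that theorem yields a combinatorial \emph{and} right proper model category on $\Sect(\mathcal{I}, F)$, which is exactly the setting in which right Bousfield localization at a \emph{set} of cofibrant objects produces a new combinatorial model structure whose fibrations agree with the original ones; this is the content of the already cited results of Barwick~\cite{Bar10} and Bergner~\cite{Ber12}.

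The first concrete step is to specify a set $K$ of cofibrant sections whose $K$-colocal cofibrant objects are precisely the levelwise cofibrant homotopy cartesian sections. I would build $K$ out of two kinds of pieces: first, cofibrant replacements of the representable sections built from the generating cofibrations of each $F(i)$, which detect levelwise cofibrancy; and second, for each arrow $f\colon i\to j$ of $\mathcal{I}$ and each such generator $c$ at level $j$, a section concentrated on $f$ with value $f^*c$ at $i$ and $c$ at $j$ joined by the identity, the role of which is to force the structural maps $\varphi_f$ of any colocal object to become weak equivalences. Smallness of $\mathcal{I}$ together with combinatoriality of each $F(i)$ ensures that $K$ can indeed be taken to be a set, and then the existence theorem for right Bousfield localizations of right proper combinatorial model categories yields the desired combinatorial model structure, whose fibrations are by construction those of $\Sect(\mathcal{I}, F)$.

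The main work, and what I expect to be the main obstacle, is to verify that the cofibrant objects of this localization really are exactly the levelwise cofibrant homotopy cartesian sections. One direction, that every $K$-cellular section is levelwise cofibrant and homotopy cartesian, proceeds by transfinite induction along the cellular presentation, using at each step that the functors $f^*$ are left Quillen and therefore preserve cofibrations, cofibrant objects, and the relevant pushouts; the second family of cells ensures that the induced comparison maps $f^* Q_j X_j \to X_i$ remain weak equivalences. For the converse, given a levelwise cofibrant homotopy cartesian section $X$, a small object argument applied to the generating cofibrations of the localization produces a $K$-cellular cofibrant replacement $\tilde X \to X$; one then checks, using the homotopy cartesian condition on $X$ together with the arrow-indexed cells in $K$, that this map is a levelwise weak equivalence, so that $X$ itself is cofibrant in the localized structure. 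Once this identification of cofibrant objects is established, the remaining assertions — combinatoriality and the coincidence of fibrations with those of $\Sect(\mathcal{I}, F)$ — are immediate from the general right Bousfield localization machinery already invoked above.
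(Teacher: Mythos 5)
Your overall strategy --- realizing the homotopy limit model structure as a right Bousfield localization $R_{\mathcal{K}}\Sect(\mathcal{I},F)$ of the injective structure at a set of cofibrant objects, using right properness for existence and reading off combinatoriality and the fibrations from the general machinery of Barwick and Hirschhorn --- is exactly the paper's. The gap is in the choice of the set $\mathcal{K}$ and in the mechanism you attribute to it. In a right Bousfield localization the cells do not ``force'' conditions on colocal objects; the colocal objects are precisely those that can be built from the cells by homotopy colimits, up to weak equivalence. Your arrow-indexed cells do not encode the homotopy cartesian condition at all: a map of sections out of the cell with value $c$ at $j$ and $f^*c$ at $i$ (identity structure map) is freely determined by its component $c\to X_j$, so this cell corepresents the same homotopy functor as the free (corepresentable) section at level $j$, and mapping out of it sees nothing of $\varphi_f$. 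Worse, these free sections are not themselves homotopy cartesian (for a tower, the structure map of such a cell at the arrow $j\to j+1$ is $\emptyset\to c$), and since every injectively cofibrant section is levelwise weakly equivalent to a projectively cofibrant one, i.e.\ to a retract of a cell complex built from exactly these free sections, \emph{every} levelwise cofibrant section is colocal with respect to your $\mathcal{K}$. Hence your localization is the identity localization, and its cofibrant objects are all levelwise cofibrant sections rather than the homotopy cartesian ones.

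The real content of the theorem, which your proposal does not supply, is the production of a set $\mathcal{K}$ consisting of \emph{homotopy cartesian} sections that generates all homotopy cartesian sections under (filtered) homotopy colimits; no explicit finitely supported collection of cells is known to do this. The paper obtains such a set abstractly: it shows that the full subcategory $\mathcal{D}$ of homotopy cartesian sections is an accessible and accessibly embedded subcategory of $\Sect(\mathcal{I},F)$, by exhibiting it as the preimage, under the accessible functor $\Phi\circ Q$ recording the derived structure maps, of the subcategories of weak equivalences $\weq(F(i))\subseteq\arr(F(i))$, which are accessible by \cite[Corollary A.2.6.6]{Lu}. Accessibility then yields a set $\mathcal{K}\subseteq\mathcal{D}$ and a regular cardinal $\lambda$ such that every homotopy cartesian section is a $\lambda$-filtered homotopy colimit of objects of $\mathcal{K}$ staying inside $\mathcal{D}$, and one localizes at that $\mathcal{K}$. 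Note also that your appeal to ``a small object argument applied to the generating cofibrations of the localization'' is not available: right Bousfield localizations are not cofibrantly generated in the usual sense, and the cofibrant replacement is the cellularization functor, so the identification of the cofibrant objects must be argued via colocality and generation, as in \cite[Theorem 5.1.5]{Hir03}.
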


\begin{proof}
Let $\mathcal{D}$ be the full subcategory of $\Sect(\mathcal{I}, F)$
consisting of the homotopy cartesian sections. Consider the functor
$$
\Phi\colon \Sect(\mathcal{I}, F)\longrightarrow \prod_{f\colon i\to j}\arr(F(i))
$$
defined as $\Phi((X_i)_{i\in\mathcal{I}})=\prod_{f\colon i\to j}\varphi_f$,
where $f$ runs over all morphisms of $\mathcal{I}$ and ${\arr}(-)$ denotes
the category of arrows, and let $Q$ denote an accessible cofibrant replacement
functor in $\Sect(\mathcal{I}, F)$.

The categories $\Sect(\mathcal{I}, F)$ and $\prod_{f\colon i\to j}\arr(F(i))$
are accessible (in fact, they are locally presentable; see~\cite[Corollary
1.54]{AR}) and the functor $\Phi$ is an accessible functor since it preserves
all colimits (as these are computed levelwise). Hence $\Phi$ is an accessible
functor between accessible categories.

Each $F(i)$ is combinatorial for every $i$ in $\mathcal{I}$, and hence by
\cite[Corollary A.2.6.6]{Lu} the subcategory of weak equivalences
$\weq(F(i))$ is an accessible and accessibly embedded subcategory of
$\arr(F(i))$. Therefore,	 $\prod_{f\colon i\to j}\weq(F(i))$ is an
accessible and accessibly embedded subcategory of $\prod_{f\colon i\to
j}\arr(F(i))$. By~\cite[Remark 2.50]{AR}, the preimage
$(\Phi\circ Q)^{-1}(\prod_{f\colon i\to j}\weq(F(i)))$ is an accessible and accessibly
embedded subcategory of $\Sect(\mathcal{I}, F)$. But this preimage is
precisely $\mathcal{D}$.

Now, since $\mathcal{D}$ is accessible there exists a set $\mathcal{K}$ of objects and a
regular cardinal $\lambda$ such that every object of $\mathcal{D}$ is a
$\lambda$-filtered colimit (and hence a homotopy colimit if we choose
$\lambda$ big enough; see~\cite[Proposition 7.3]{Dug01}) of objects in
$\mathcal{K}$. Moreover, since $\mathcal{D}$ is accessibly embedded this
homotopy colimit lies in $\mathcal{D}$.

The homotopy limit model structure is then the right Bousfield localization
$R_{\mathcal{K}}\Sect(\mathcal{I}, F)$. (We can perform this right Bousfield
localization because every $F(i)$ and hence $\Sect(\mathcal{I}, F)$ are
right proper.) The fact that the cofibrant objects of this new model
structure are precisely the  levelwise cofibrant homotopy cartesian sections
follows from~\cite[Theorem 5.1.5]{Hir03}.
\end{proof}

\section{Towers of model categories}\label{sec:towers}
Let $\mathbb{N}$ be the category $0\to 1\to 2\to\cdots$. A~\emph{tower of
model categories} is a left Quillen presheaf $F\colon\mathbb{N}^{\rm op}\to
\CAT$. The objects of the category of sections are then sequences $X_0,
X_1,\ldots, X_n,\ldots$, where each $X_i$ is an object of $F(i)$, together
with morphisms $\varphi_i\colon f^*X_{i+1}\to X_i$ in $F(i)$ for every $i\ge
0$, where $f\colon i\to i+1$ is the unique morphism from $i$ to $i+1$ in
$\mathbb{N}$. A morphism between two sections $\phi_{\bullet}\colon
X_{\bullet}\to Y_{\bullet}$ consists of morphisms  $\phi_i\colon X_i\to Y_i$
in $F(i)$ such that the diagram
$$
\xymatrix{
f^*X_{i+1}\ar[r] \ar[d]_{f^*\phi_{i+1}} & X_i \ar[d]^{\phi_i}\\
f^*Y_{i+1}\ar[r] & Y_i
}
$$
commutes for every $i\ge 0$.

\begin{proposition}
Let $F\colon \mathbb{N}^{\rm op}\to \CAT$ be a tower of model categories,
where $F(i)$ is a combinatorial model category for every $i\ge 0$. There
exists a combinatorial model structure on the category of sections, denoted by
$\Sect(\mathbb{N}, F)$, where a map $\phi_{\bullet}$ is a weak
equivalence or a cofibration if and only if for every $i\ge 0$ the map $\phi_i$ is a weak
equivalence or a cofibration in $F(i)$, respectively. The fibrations are the
maps $\phi_\bullet\colon X_\bullet\to Y_\bullet$ such that $\phi_0$ is a
fibration in $F(0)$ and
$$
X_{i+1}\longrightarrow Y_{i+1}\times_{f_*Y_{i}}f_*X_{i}
$$
is a fibration in $F(i+1)$ for every $i\ge 0$, where $f_*$ denotes the right
adjoint to $f^*$. The fibrant objects are those sections $X_\bullet$ such
that $X_i$ is fibrant in $F(i)$ and the morphism
$$
X_{i+1}\longrightarrow f_* X_i
$$
is a fibration in $F(i+1)$ for every $i\ge 0$. \label{prop:model_str_tow}
\end{proposition}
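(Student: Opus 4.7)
The plan is to apply Theorem~\ref{thm:sect_model_str} to the tower $F\colon \mathbb{N}^{\rm op}\to\CAT$, obtaining the injective model structure on $\Sect(\mathbb{N}, F)$. This directly gives the combinatoriality and the stated descriptions of the weak equivalences and cofibrations, so the only remaining content is the matching-map characterization of the fibrations; the description of the fibrant objects will then drop out by specializing $Y_\bullet$ to the terminal section, whereupon $Y_{i+1}\times_{f_*Y_i}f_*X_i$ collapses to $f_*X_i$.

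To identify the fibrations, I would fix $\phi_\bullet\colon X_\bullet\to Y_\bullet$ and analyze when it has the right lifting property against an arbitrary sectionwise trivial cofibration $A_\bullet\to B_\bullet$ (which, by the injective structure, simply means a morphism of sections that is a trivial cofibration at every level). Producing a lift amounts to building morphisms $\ell_i\colon B_i\to X_i$ compatible with the structure maps $\varphi_i\colon f^*X_{i+1}\to X_i$ and $\varphi'_i\colon f^*Y_{i+1}\to Y_i$, and this is done inductively on $i$. At $i=0$ the existence of $\ell_0$ for every trivial cofibration in $F(0)$ is exactly the condition that $\phi_0$ be a fibration. For the inductive step, with $\ell_i$ already chosen, the adjunction $f^*\dashv f_*$ turns the compatibility of a prospective $\ell_{i+1}$ with $\ell_i$ into a map $B_{i+1}\to f_*X_i$; combined with the prescribed target map $B_{i+1}\to Y_{i+1}$ and the square induced by $\phi_\bullet$, this assembles via the universal property of the pullback into a single lifting problem of $A_{i+1}\to B_{i+1}$ against the matching map $m_{i+1}\colon X_{i+1}\to Y_{i+1}\times_{f_*Y_i}f_*X_i$. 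Thus solving the inductive step for every trivial cofibration is equivalent to $m_{i+1}$ being a fibration in $F(i+1)$. The converse direction, that a fibration $\phi_\bullet$ forces each $m_{i+1}$ to be a fibration, is obtained by transposing a given lifting problem for $m_{i+1}$ back to a sectionwise lifting problem, extending the trivial cofibration at level $i+1$ to one that is the identity elsewhere and invoking the fibration property of $\phi_\bullet$.

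The main technical point is the adjoint repackaging in the inductive step, which rests on the universal properties of $f^*\dashv f_*$ and of the pullback defining $m_{i+1}$; beyond this, the proof is bookkeeping combined with the existence of the injective structure from Theorem~\ref{thm:sect_model_str}.
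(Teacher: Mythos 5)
Your proposal is correct. For the existence of the model structure together with the levelwise description of weak equivalences and cofibrations you do exactly what the paper does, namely invoke Theorem~\ref{thm:sect_model_str}. Where you diverge is the characterization of the fibrations: the paper simply cites Greenlees--Shipley \cite[Theorem~3.1]{GS13}, whereas you reprove that statement in this special case by the standard Reedy-style argument for diagrams over an inverse category --- inductively solving the lifting problem against a levelwise trivial cofibration, with the adjunction $f^*\dashv f_*$ and the universal property of the pullback converting the compatibility constraint at stage $i+1$ into a single lifting problem against the matching map $m_{i+1}$, and conversely transposing a lifting problem for $m_{i+1}$ into a sectionwise one by padding with identities (and, at level $0$, using the free section $(A,\emptyset,\emptyset,\dots)$ left adjoint to $ev_0$). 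The inductive step checks out: the two components of the prescribed map into $Y_{i+1}\times_{f_*Y_i}f_*X_i$ do agree over $f_*Y_i$ because $\phi_i\circ\ell_i=\psi_i$, and the outer square for $m_{i+1}$ commutes because $\ell_i\circ j_i=a_i$. Your derivation of the fibrant objects from the terminal section is also fine, using that $f_*$ is a right adjoint (so $f_*(*)=*$) and a right Quillen functor (so fibrancy of $X_{i+1}$ follows inductively from $X_{i+1}\twoheadrightarrow f_*X_i\twoheadrightarrow *$). The trade-off is only one of self-containedness: your route makes the proposition independent of \cite{GS13} at the cost of the bookkeeping you describe, while the paper's citation covers the more general diagram shapes (arbitrary inverse categories) needed later, e.g.\ in Proposition~\ref{prop:model_str_pull}.
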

\begin{proof}
The existence of the required model structure follows from
Theorem~\ref{thm:sect_model_str}. The description of the fibrations follows
from~\cite[Theorem~3.1]{GS13}.
\end{proof}

\begin{proposition}
\label{prop:post_model} Let $F\colon\mathbb{N}^{\rm op}\to \CAT$ be a tower
of model categories, where each $F(i)$ is combinatorial and right proper for
every $i\ge 0$. Then there is a model structure $\Tow(F)$ on the category of
sections of $F$ with the following properties:
\begin{itemize}
\item[{\rm (i)}] A morphism $\phi_\bullet$ is a fibration in $\Tow(F)$ if and only
    $\phi_\bullet$ is a fibration in $\Sect(\mathbb{N}, F)$.
\item[{\rm (ii)}] A section $X_\bullet$ is cofibrant in $\Tow(F)$ if and only if
    $X_i$ is cofibrant in $F(i)$ and the morphism $f^*X_{i+1} \to X_{i}$
    is a weak equivalence in $F(i)$ for every~$i\ge 0$.
\item[{\rm (iii)}] A morphism $\phi_{\bullet}$ between cofibrant sections
    is a weak equivalence in $\Tow(F)$ if and only if $\phi_i$ is a weak
    equivalence in $F(i)$ for every $i\ge 0$.
\end{itemize}
\label{prop:model_str_hotow}
\end{proposition}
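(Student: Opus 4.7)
The strategy is to realize $\Tow(F)$ as the instance of the homotopy limit model structure of Theorem~\ref{thm:holim_model_str} with $\mathcal{I}=\mathbb{N}$ and then translate each of the three claims into the concrete language of the tower. Under the hypotheses of the proposition, each $F(i)$ is right proper and combinatorial, so Theorem~\ref{thm:holim_model_str} produces a combinatorial model structure $\Tow(F):=R_{\K}\Sect(\mathbb{N}, F)$ whose fibrations coincide with those of $\Sect(\mathbb{N}, F)$ and whose cofibrant objects are exactly the levelwise cofibrant homotopy cartesian sections. Part (i) follows at once from this description of the fibrations.

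For (iii), I would invoke the general fact that a morphism between cofibrant objects in a right Bousfield localization $R_{\K}\M$ is a weak equivalence in $R_{\K}\M$ if and only if it is a weak equivalence in $\M$ (the dual of the corresponding statement for left Bousfield localizations). Combined with the fact, from Theorem~\ref{thm:sect_model_str} or Proposition~\ref{prop:model_str_tow}, that weak equivalences in $\Sect(\mathbb{N}, F)$ are detected levelwise, this yields (iii).

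The bulk of the work is (ii). By Theorem~\ref{thm:holim_model_str}, a section $X_\bullet$ is $\Tow(F)$-cofibrant if and only if each $X_i$ is cofibrant in $F(i)$ and $\varphi_f\colon f^* Q_j X_j \to X_i$ is a weak equivalence in $F(i)$ for every morphism $f\colon i \to j$ of $\mathbb{N}$. Assuming each $X_i$ is cofibrant, the map $Q_j X_j \to X_j$ is a weak equivalence between cofibrant objects, so $f^* Q_j X_j \to f^* X_j$ is a weak equivalence by Ken Brown's lemma, and two-out-of-three reduces the condition to $\varphi_f\colon f^* X_j \to X_i$ being a weak equivalence. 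The structural identity $\varphi_{g\circ f} = \varphi_f \circ f^*\varphi_g$, together with the fact that each $f^*$ preserves weak equivalences between cofibrant objects, then shows that the condition for arbitrary $f\colon i \to j$ in $\mathbb{N}$ is automatic once it is known for the successor maps $f\colon i \to i+1$. This yields the equivalent description of cofibrant objects in (ii). The principal obstacle is this last reduction from arbitrary morphisms to successor maps, though in the end it amounts to careful bookkeeping with Ken Brown's lemma and the transitivity encoded in the definition of a section.
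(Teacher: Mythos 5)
Your proposal is correct and follows essentially the same route as the paper: both obtain $\Tow(F)$ by applying Theorem~\ref{thm:holim_model_str} to the tower-shaped presheaf and deduce (i) and (iii) from the fact that $\Tow(F)$ is a right Bousfield localization of $\Sect(\mathbb{N}, F)$ with unchanged fibrations. The paper leaves the translation in (ii) implicit, whereas you correctly spell out the Ken Brown/two-out-of-three argument that removes the cofibrant replacement and the reduction from arbitrary morphisms of $\mathbb{N}$ to the successor maps.
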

\begin{proof}
The existence of the model structure $\Tow(F)$ follows from
Theorem~\ref{thm:holim_model_str} applied to the left Quillen presheaf $F$.
The characterization of the weak equivalences between cofibrant objects
follows since $\Tow(F)$ is a right Bousfield localization of
$\Sect(\mathbb{N}, F)$.
\end{proof}

\subsection{Postnikov sections of model structures}\label{sec:postnikovtowers}
Let $\C$ be a left proper combinatorial model category and $n\ge 0$. The
model structure $P_n\C$ of $n$-types in $\C$ is the left Bousfield localization of $\C$ with respect to the
set of morphisms $I_{\C}\square f_n$. Here $I_{\C}$ is the set of generating
cofibrations of $\C$, $f_n\colon S^{n+1}\to D^{n+2}$ is the inclusion of
simplicial sets from the $(n+1)$-sphere to the $(n+2)$-disk, and $\square$
denotes the pushout-product of morphisms constructed using the action of
simplicial sets on $\C$ coming from the existence of framings;
see~\cite[Section 5.4]{Hov99}. A longer account about model structures for $n$-types can
be found in~\cite[Section 3]{GR14a}.

For every $n<m$ the identity is a left Quillen functor $P_m\C\to P_n\C$. Thus
we have a tower of model categories $P_{\bullet}\C\colon \mathbb{N}^{\rm
op}\to \CAT$. The objects $X_{\bullet}$ of the category of sections are
sequences
$$
\cdots\longrightarrow X_n\longrightarrow\cdots\longrightarrow X_2\longrightarrow X_1\longrightarrow X_0
$$
of morphisms in $\C$,  and its morphisms $f_\bullet\colon X_\bullet\to
Y_\bullet$ are given by commutative ladders
$$
\xymatrix{
\cdots\ar[r] & X_n\ar[r]\ar[d]^{f_n} & \cdots\ar[r] & X_2\ar[r]\ar[d]^{f_2} & X_1 \ar[r]\ar[d]^{f_1} &
X_0\ar[d]^{f_0} \\
\cdots\ar[r] & Y_n\ar[r] & \cdots\ar[r] & Y_2\ar[r]& Y_1 \ar[r] & Y_0.
}
$$

By Proposition~\ref{prop:model_str_tow}, if $\C$ is a left proper
combinatorial model category, then there exists a left proper combinatorial
model structure on the category of sections $\Sect(\mathbb{N},
P_{\bullet}\C)$, where a map $f_{\bullet}$ is a weak equivalence or a
cofibration if for every $n\ge 0$ the map $f_n$ is a weak equivalence or a
cofibration in $P_n\C$, respectively. The fibrations are the maps
$f_\bullet\colon X_\bullet\to Y_\bullet$ such that $f_0$ is a fibration in
$P_0\C$ and
$$
X_n\longrightarrow Y_n\times_{Y_{n-1}}X_{n-1}
$$
is a fibration in $P_n\C$ for every $n\ge 1$. The fibrant objects can be
characterized as follows:
\begin{lemma}
Let $X_\bullet$ be a section of $P_\bullet\C$. The following are equivalent:
\begin{itemize}
\item[{\rm (i)}] $X_\bullet$ is fibrant in $\Sect(\mathbb{N},
    P_{\bullet}\C)$.
\item[{\rm (ii)}] $X_0$ is fibrant in $P_0\C$ and $X_{n+1}\to X_{n}$ is a
    fibration in $P_{n+1}\C$ for all $n\ge 0$.
\item[{\rm (iii)}] $X_n$ is fibrant in $P_n\C$ and $X_{n+1}\to X_{n}$ is
    a fibration in $\C$ for all $n\ge 0$.
\end{itemize}
\end{lemma}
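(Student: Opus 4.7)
The plan is to first apply Proposition~\ref{prop:model_str_tow} to the Postnikov tower $P_\bullet\C$. In this setting both the left Quillen transition functor $f^*\colon P_{n+1}\C\to P_n\C$ and its right adjoint $f_*$ are the identity on the underlying category of $\C$, so Proposition~\ref{prop:model_str_tow} translates (i) into the equivalent condition that for every $n\ge 0$, $X_n$ is fibrant in $P_n\C$ \emph{and} the map $X_{n+1}\to X_n$ is a fibration in $P_{n+1}\C$; call this reformulation (i$'$). Comparing (i$'$) with (ii) and (iii), one sees that (ii) weakens (i$'$) by asking for fibrancy only at level $0$, while (iii) strengthens the fibration condition on the transition maps (from $P_{n+1}\C$-fibrations to $\C$-fibrations).

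The key observation that bridges these variants is that $P_n\C$ is itself a left Bousfield localization of $P_{n+1}\C$, since every $n$-type is an $(n+1)$-type and therefore the $P_{n+1}$-local equivalences are contained in the $P_n$-local equivalences. From this I deduce two facts used throughout. First, every $P_n\C$-fibrant object is automatically $P_{n+1}\C$-fibrant. Second, by the standard comparison of fibrations under a left Bousfield localization, a map between two $P_{n+1}\C$-fibrant objects is a $P_{n+1}\C$-fibration if and only if it is a $\C$-fibration.

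With these tools in hand the three implications become short. For (ii)~$\Rightarrow$~(i$'$) I induct on $n$: assuming $X_n$ is $P_n\C$-fibrant, the first fact promotes this to $P_{n+1}\C$-fibrancy, and composing with the $P_{n+1}\C$-fibration $X_{n+1}\to X_n$ yields fibrancy of $X_{n+1}$ in $P_{n+1}\C$; the reverse implication (i$'$)~$\Rightarrow$~(ii) is immediate. For (i$'$)~$\Leftrightarrow$~(iii) I note that in either case both $X_n$ and $X_{n+1}$ are $P_{n+1}\C$-fibrant, so the second fact freely interchanges ``$\C$-fibration'' and ``$P_{n+1}\C$-fibration'' on the transition map. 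The only nontrivial input is the fibrant-object comparison of fibrations under left Bousfield localization; beyond that, everything reduces to bookkeeping about the nested model structures $P_0\C,P_1\C,\dots$ on the same underlying category, so I do not anticipate a genuine obstacle.
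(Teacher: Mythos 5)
Your proof is correct and follows essentially the same route as the paper, which disposes of the lemma in one line by observing that a fibration in $P_n\C$ is also a fibration in $P_{n+1}\C$ as well as a fibration in $\C$. Your write-up is in fact the more complete one: the implication (iii)~$\Rightarrow$~(i) genuinely needs the fact you invoke, that a map between $P_{n+1}\C$-fibrant (i.e.\ local) objects is a $P_{n+1}\C$-fibration if and only if it is a $\C$-fibration (\cite[Proposition~3.3.16(1)]{Hir03}), which the paper's one-line justification leaves implicit; the only quibble is the wording ``strengthens'' in your overview, since requiring the transition maps to be $\C$-fibrations is the \emph{weaker} condition.
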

\begin{proof}
This follows because a fibration in $P_n\C$ is also a fibration in
$P_{n+1}\C$ as well as a fibration in $\C$.
\end{proof}

If the model structures for $n$-types $P_n\C$ are right proper for every
$n\ge 0$, then by Proposition~\ref{prop:model_str_hotow} the model structure
$\Tow(P_\bullet\C)$ exists and will be denoted by $\Post(\C)$. It has the
following properties:
\begin{itemize}
\item[{\rm (i)}] A morphism $f_\bullet$ is a fibration in $\Post(\C)$ if and only if
    $f_\bullet$ is a fibration in $\Sect(\mathbb{N},
    P_{\bullet}\C)$.
\item[{\rm (ii)}] A section $X_\bullet$ is cofibrant if and only if $X_n$ is
    cofibrant in $\C$ and $X_{n+1} \to X_{n}$ is a weak equivalence in
    $P_{n}\C$ for every $n\ge 0$.
\item[{\rm (iii)}] A morphism $f_{\bullet}$ between cofibrant sections is
    a weak equivalence if and only if $f_n$ is a weak equivalence in
    $P_n\C$ for every $n\ge 0$.
\end{itemize}

For every $n\ge 0$ the identity functors form a Quillen pair ${\rm
id}:\C\rightleftarrows P_n\C:{\rm id}$, since $P_n\C$ is a left
Bousfield localization of $\C$. This extends to a Quillen pair
$$
\xymatrix@C-=0.5cm{
{\rm id}:\C^{\mathbb{N}^{\rm op}}_{\rm inj} \ar@<3pt>[r] & \ar@<1pt>[l]
\Sect(\mathbb{N}, P_\bullet\C):{\rm id},
}
$$

where $\C^{\mathbb{N}^{\rm op}}_{\rm inj}$ denotes the category of
$\mathbb{N}^{\rm op}$-indexed diagrams with the injective model structure.
Indeed weak equivalences and cofibrations in $\C^{\mathbb{N}^{\rm op}}_{\rm
inj}$ are defined levelwise and every weak equivalence in $\C$ is a weak
equivalence in $P_n\C$ for all $n\ge 0$. Hence, there is a Quillen pair
$$
\xymatrix@C-=0.5cm{
\C \ar@<3pt>[rr]^-{\rm const} & & \C^{\mathbb{N}^{\rm op}}_{\rm inj} \ar@<3pt>[rr]^-{\rm id} \ar@<1pt>[ll]^-{\lim}
& & \ar@<1pt>[ll]^-{\rm id} \Sect(\mathbb{N}, P_\bullet\C) \ar@<3pt>[rr]^-{\rm id}& & \Post(\C) \ar@<1pt>[ll]^-{\rm id},
}
$$
where ${\rm const}$ denotes the constant diagram functor.

\begin{lemma}
\label{lem:post_quillen_pair} The adjunction ${\rm const}:
\C\rightleftarrows \Post(\C): \lim$ is a Quillen pair.
\end{lemma}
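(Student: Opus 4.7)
My plan is to verify the two conditions of the Quillen pair separately: that $\mathrm{const}$ preserves trivial cofibrations, and that $\lim$ preserves fibrations and trivial fibrations. (Checking directly that $\mathrm{const}$ preserves cofibrations is awkward, because the cofibrations in $\Post(\C)$ form a strictly smaller class than those of $\Sect(\mathbb{N},P_\bullet\C)$.)

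For trivial cofibrations, if $f\colon A\to B$ is a trivial cofibration in $\C$, then $\mathrm{const}(f)$ is levelwise a $P_n\C$-cofibration (cofibrations coincide in $\C$ and its left Bousfield localizations) and levelwise a $P_n\C$-weak equivalence (every $\C$-weak equivalence remains so in any $P_n\C$). Hence $\mathrm{const}(f)$ is a trivial cofibration in $\Sect(\mathbb{N},P_\bullet\C)$, and since $\Post(\C)$ is a right Bousfield localization retaining the same fibrations (property~(i) of Proposition~\ref{prop:model_str_hotow}), the trivial cofibrations also coincide, so $\mathrm{const}(f)$ is a trivial cofibration in $\Post(\C)$. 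For fibrations: by property~(i), a $\Post(\C)$-fibration is a $\Sect$-fibration, which Proposition~\ref{prop:model_str_tow} describes as a tower of $P_n\C$-fibrations with a Reedy matching condition. Since each $P_n\C$-fibration is a $\C$-fibration, the standard inverse-limit-of-fibrations argument gives that $\lim\phi_\bullet$ is a $\C$-fibration.

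The delicate step is trivial fibrations. Given a $\Post$-trivial fibration $\phi_\bullet\colon X_\bullet\to Y_\bullet$, I will reduce to the case of $\Post$-cofibrant sections by taking cofibrant replacements. By property~(iii) of Proposition~\ref{prop:model_str_hotow}, the resulting $\Post$-weak equivalence between $\Post$-cofibrant sections is levelwise a $P_n\C$-weak equivalence, hence levelwise a $\C$-weak equivalence. Since the relevant diagrams can be arranged to be $\Sect$-fibrant (where $\lim$ computes $\holim$), the map $\lim\phi_\bullet$ is a $\C$-weak equivalence, and combined with fibrancy it is a $\C$-trivial fibration. The main obstacle is precisely this step: $\Post(\C)$-weak equivalences form a strictly larger class than those of $\Sect(\mathbb{N},P_\bullet\C)$, so the already-established Quillen pair $\mathrm{const}\dashv\lim$ between $\C$ and $\Sect(\mathbb{N},P_\bullet\C)$ does not directly upgrade to $\C$ and $\Post(\C)$; closing this gap via property~(iii) together with the tower-to-homotopy-limit correspondence is the essential content of the proof.
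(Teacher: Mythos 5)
Your treatment of trivial cofibrations is exactly the paper's argument (a right Bousfield localization has the same trivial cofibrations as the original model structure), but the second half of your proposal contains a genuine error. You claim that a map which is ``levelwise a $P_n\C$-weak equivalence'' is ``hence levelwise a $\C$-weak equivalence.'' This implication is backwards: $P_n\C$ is a \emph{left} Bousfield localization of $\C$, so its weak equivalences form a strictly \emph{larger} class (e.g.\ $S^{n+2}\to *$ is a weak equivalence in $P_n\sset_*$ but not in $\sset_*$). So Proposition~\ref{prop:model_str_hotow}(iii) gives you no levelwise control in $\C$ at all. Moreover, even granting levelwise information in the localized structures, deducing that $\lim\phi_\bullet$ is a weak equivalence in $\C$ is precisely the convergence (``hypercompleteness'') question, which the paper shows holds for $\sset_*$ and $\Ch_b(\mathbb{Z})$ but fails in general (e.g.\ for $L_\calS\sset_*$). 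Note also that ``$\lim$ preserves trivial fibrations'' is adjoint to ``$\mathrm{const}$ preserves all cofibrations,'' which is exactly the statement you correctly flagged at the outset as problematic --- so your strategy does not actually side-step that difficulty, it restates it.

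The paper avoids all of this by invoking \cite[Proposition~8.5.4(2)]{Hir03}: for an adjunction into a model category it suffices that the left adjoint preserve trivial cofibrations and cofibrations \emph{between cofibrant objects}. If $f\colon X\to Y$ is a cofibration between cofibrant objects of $\C$, then $\mathrm{const}(f)$ is a cofibration between cofibrant objects of $\Sect(\mathbb{N},P_\bullet\C)$, and $\mathrm{const}(X)$, $\mathrm{const}(Y)$ are in fact cofibrant in $\Post(\C)$ by Proposition~\ref{prop:model_str_hotow}(ii) (they are levelwise cofibrant and the structure maps are identities, hence weak equivalences). Since a cofibration of $\Sect(\mathbb{N},P_\bullet\C)$ between objects that are cofibrant in the right Bousfield localization is a cofibration there (\cite[Proposition~3.3.16(2)]{Hir03}), $\mathrm{const}(f)$ is a cofibration in $\Post(\C)$. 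You should replace your analysis of fibrations and trivial fibrations of $\lim$ with this argument.
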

\begin{proof}
By  \cite[Proposition 8.5.4(2)]{Hir03}, it is enough to check that the left adjoint preserves trivial cofibrations
and cofibrations between cofibrant objects. If $f$ is a trivial cofibration
in $\C$ then ${\rm const}(f)$ is a trivial cofibration in
$\Sect(\mathbb{N}, P_\bullet\C)$. But since $\Post(\C)$ is a right
Bousfield localization of $\Sect(\mathbb{N}, P_\bullet\C)$ it has
the same trivial cofibrations. Hence ${\rm const}(f)$ is a trivial
cofibration in $\Post(\C)$.

Let $f\colon X\to Y$ be a cofibration between cofibrant objets in $\C$. Then
${\rm const}(f)$ is a cofibration between cofibrant objects in
$\Sect(\mathbb{N}, P_\bullet\C)$. But ${\rm const}(X)$ and ${\rm
const(Y)}$ are both cofibrant in $\Post(\C)$ by
Proposition~\ref{prop:post_model}. Hence ${\rm const}(f)$ is a cofibration in
$\Post(\C)$ if and only if it is a cofibration in $\Sect(\mathbb{N},
P_\bullet\C)$ (see \cite[Proposition 3.3.16(2)]{Hir03}).
\end{proof}

Let $\sset_*$ denote the category of pointed simplicial sets with the Kan--Quillen
model structure. Then the model structure $\Post(\sset_*)$
exists, since $P_n\sset_*$ is right proper for every $n\ge 0$;
see~\cite[Theorem 9.9]{Bou}.
\begin{theorem} The Quillen pair ${\rm const}:\sset_*\rightleftarrows \Post(\sset_*):\lim$ is a
Quillen equivalence.
\end{theorem}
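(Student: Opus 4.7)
The plan is to apply the standard Quillen equivalence criterion~\cite[Corollary~1.3.16]{Hov99}: the pair $(\mathrm{const},\lim)$ is a Quillen equivalence if and only if, for every cofibrant $X\in\sset_*$ (i.e.\ every pointed simplicial set) and every fibrant $Y_\bullet\in\Post(\sset_*)$, a morphism $\phi\colon\mathrm{const}(X)\to Y_\bullet$ is a weak equivalence in $\Post(\sset_*)$ exactly when its adjoint $\tilde\phi\colon X\to\lim Y_\bullet$ is a weak equivalence in $\sset_*$. By the fibrancy characterization given in Section~\ref{sec:postnikovtowers}, $Y_\bullet$ fibrant means each $Y_n$ is an $n$-truncated Kan complex and each $Y_{n+1}\to Y_n$ is a Kan fibration.

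I would first pick a cofibrant replacement $\hat Y_\bullet\to Y_\bullet$ in $\Post(\sset_*)$. Being a trivial fibration in $\Post(\sset_*)$ with fibrant target, it shows that $\hat Y_\bullet$ is bifibrant in $\Post(\sset_*)$. Combining fibrancy with property~(ii) of $\Post$ from the list preceding Lemma~\ref{lem:post_quillen_pair}, $\hat Y_\bullet$ is a tower of Kan fibrations of truncated Kan complexes in which additionally each $\hat Y_{n+1}\to\hat Y_n$ is a weak equivalence in $P_n\sset_*$, hence induces isomorphisms on $\pi_k$ for $k\le n$ at every basepoint. Since $\lim$ is right Quillen by Lemma~\ref{lem:post_quillen_pair}, the induced map $\lim\hat Y_\bullet\to\lim Y_\bullet$ is a weak equivalence in $\sset_*$.

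Since $\mathrm{const}(X)$ is itself cofibrant in $\Post(\sset_*)$ (its identity structure maps are trivially $P_n$-weak equivalences), $\phi$ lifts to some $\hat\phi\colon\mathrm{const}(X)\to\hat Y_\bullet$ along the trivial fibration $\hat Y_\bullet\to Y_\bullet$, and $\phi$ is a $\Post(\sset_*)$-weak equivalence if and only if $\hat\phi$ is. Both source and target of $\hat\phi$ being cofibrant in $\Post(\sset_*)$, property~(iii) of $\Post$ yields: $\hat\phi$ is a weak equivalence iff each $\hat\phi_n\colon X\to\hat Y_n$ is a $P_n$-weak equivalence, equivalently $\pi_k(X)\to\pi_k(\hat Y_n)$ is an isomorphism for all $k\le n$ at all basepoints.

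The argument is closed by Milnor's short exact sequence for the tower of Kan fibrations $\hat Y_\bullet$,
\[
0 \longrightarrow {\lim}^{1}_{m} \pi_{k+1}(\hat Y_m) \longrightarrow \pi_k(\lim\hat Y_\bullet) \longrightarrow \lim_m \pi_k(\hat Y_m) \longrightarrow 0.
\]
By homotopy cartesianness the tower $\{\pi_k(\hat Y_m)\}_m$ is constant for $m\ge k$, so the ${\lim}^{1}$ term vanishes and $\pi_k(\lim\hat Y_\bullet)\cong\pi_k(\hat Y_n)$ for every $n\ge k$ via the canonical projection. Combined with $\lim\hat Y_\bullet\simeq\lim Y_\bullet$, this shows that $\tilde\phi$ is a $\sset_*$-weak equivalence if and only if $\pi_k(X)\to\pi_k(\hat Y_n)$ is an isomorphism for every $k\le n$ and every $n$, which is precisely when $\hat\phi$ (and therefore $\phi$) is a $\Post(\sset_*)$-weak equivalence. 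The main subtlety is arranging bifibrancy of $\hat Y_\bullet$ so that Milnor's sequence applies cleanly to a strict tower of Kan fibrations; this is why we replace cofibrantly a fibrant $Y_\bullet$.
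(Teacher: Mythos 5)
Your proof is correct, and it takes a noticeably different route from the paper's. The paper verifies the derived unit and counit separately (via \cite[Proposition 1.3.13]{Hov99}): the unit is handled by quoting the classical convergence of Postnikov towers \cite[Ch.VI, Theorem 3.5]{GJ99}, and the counit by a homotopy cofinality argument (restricting to $\mathbb{N}^{\rm op}_{>n}$) followed by a Milnor-sequence comparison with a constant tower. You instead verify the adjoint criterion $\phi\ \text{w.e.}\iff\tilde\phi\ \text{w.e.}$ directly for cofibrant $X$ and fibrant $Y_\bullet$, and the only computational input is the Milnor sequence for the bifibrant replacement $\hat Y_\bullet$ together with the observation that $\{\pi_k(\hat Y_m)\}_m$ stabilizes for $m\ge k$ (so $\lim^1$ vanishes by Mittag--Leffler and $\pi_k(\lim\hat Y_\bullet)\cong\pi_k(\hat Y_n)$ for $n\ge k$). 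This buys you a unified, essentially self-contained argument: both implications fall out of the commuting triangle $\pi_k(\hat\phi_n)=\pi_k(\mathrm{proj}_n)\circ\pi_k(\lim\hat\phi)$, the cofinality step disappears, and the classical Postnikov convergence theorem becomes a corollary (apply your criterion to the fibrant replacement of $\mathrm{const}(X)$) rather than an ingredient. Two small points: the criterion you invoke is Hovey's definition of Quillen equivalence \cite[Definition 1.3.12]{Hov99} (equivalently part of Proposition 1.3.13) rather than Corollary 1.3.16; and, as in the paper's own proof, one should in principle track basepoints and the nonabelian $\lim^1$ for $\pi_1$/$\pi_0$ when applying the Milnor sequence to possibly non-connected towers --- but this is standard and does not affect the argument.
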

\begin{proof}
By \cite[Proposition 1.3.13]{Hov99} it suffices to check that the derived
unit and counit are weak equivalences. Let $X$ be a fibrant simplicial set.
Then ${\rm const}(X)$ is cofibrant in $\Post(\sset_*)$, since ${\rm const}$ is
a left Quillen functor. Let
$$
\cdots\longrightarrow X_n\longrightarrow\cdots\longrightarrow X_2\longrightarrow X_1\longrightarrow X_0
$$
be a fibrant replacement of ${\rm const}(X)$ in $\Post(\sset_*)$. Hence we have
that $X_n$ is fibrant in $P_n\sset_*$ and $X_{n+1}\to X_n$ is a fibration in
$\sset_*$ and a weak equivalence in $P_n\sset_*$ for all $n\ge 0$. By
\cite[Ch.VI, Theorem 3.5]{GJ99}, the map $X\to \lim X_{\bullet}$ is a weak
equivalence.

Now, let $X_{\bullet}$ be any fibrant and cofibrant object in $\Post(\sset_*)$.
We have to see that the map ${\rm const}(\lim X_{\bullet})\to X_{\bullet}$ is
a weak equivalence in $\Post(\sset_*)$. This is equivalent to seeing that the
map $\lim X_{\bullet}\to X_n$ is a weak equivalence in $P_n\sset_*$ for every
$n\ge 0$. First note that since the category $\mathbb{N}^{\rm
op}_{>n}=\cdots\to n+3\to n+2\to n+1$ is homotopy left cofinal in
$\mathbb{N}^{\rm op}$ we have that $\lim X_{\bullet}$ is weakly equivalent to
$\lim_{\mathbb{N}^{\rm op}_{> n}} X_{\bullet}$ for every~$n$ (see
\cite[Theorem 19.6.13]{Hir03}). Hence it is enough to check that the map
$\lim_{\mathbb{N}^{\rm op}_{>n}} X_{\bullet}\to X_n$ is a weak equivalence in
$P_n\sset_*$ for all $n\ge 0$. For every $n\ge 0$ we have a map of towers
$$
\xymatrix{
\cdots\ar[r] & X_{m}\ar[r]\ar[d] & \cdots\ar[r] & X_{n+3}\ar[r]\ar[d] & X_{n+2} \ar[r]\ar[d] &
X_{n+1}\ar@{=}[d] \\
\cdots\ar@{=}[r] & X_{n+1}\ar@{=}[r] & \cdots\ar@{=}[r] & X_{n+1}\ar@{=}[r]& X_{n+1} \ar@{=}[r] & X_{n+1},
}
$$
where each vertical map is a weak equivalence in $P_{n+1}\sset_*$. Using
the Milnor exact sequence (see \cite[Ch.VI, Proposition 2.15]{GJ99}) we get a
morphism of short exact sequences
$$
\xymatrix{
0 \ar[r] & \lim\nolimits^{1}_{\mathbb{N}^{\rm op}_{>n}}\pi_{i+1}X_{\bullet}\ar[r]\ar[d] &
\pi_i(\lim\nolimits_{\mathbb{N}^{\rm op}_{> n}}X_{\bullet}) \ar[r]\ar[d] & \lim\nolimits_{\mathbb{N}^{\rm op}_{> n}}\pi_i X_{\bullet}\ar[r]\ar[d] & 0 \\
0 \ar[r] & \lim\nolimits^{1}_{\mathbb{N}^{\rm op}_{>n}}\pi_{i+1}X_{n+1}\ar[r] &
\pi_i(\lim\nolimits_{\mathbb{N}^{\rm op}_{> n}}X_{n+1}) \ar[r] & \lim\nolimits_{\mathbb{N}^{\rm op}_{> n}}\pi_i X_{n+1}\ar[r] & 0.
}
$$
For $0\le i<n$ the left and right vertical morphisms are isomorphisms, hence the map
$\lim_{\mathbb{N}^{\rm op}_{>n}} X_{\bullet}\to X_{n+1}$ is a weak
equivalence in $P_{n}\sset_*$. Therefore the map
$$
\lim\nolimits_{\mathbb{N}^{\rm op}_{>n}} X_{\bullet}\longrightarrow X_{n+1}\longrightarrow X_n
$$
is a weak equivalence in $P_{n}\sset_*$ for $n\ge 0$.
\end{proof}

\begin{corollary}
Let $X\to Y$ be a map in $\Post(\sset_*)$. Then $X\to Y$ is a weak equivalence if and only if
$\lim \widehat X\to \lim \widehat Y$ is a weak equivalence in $\sset_*$, where $\widehat X$ and $\widehat Y$ denote a fibrant replacement of $X$ and $Y$, respectively.
\end{corollary}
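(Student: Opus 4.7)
The plan is to derive this corollary directly from the Quillen equivalence $\mathrm{const}\colon \sset_* \rightleftarrows \Post(\sset_*)\colon \lim$ just established. The strategy is to reinterpret both conditions in terms of the total derived functor $\mathbb{R}\lim$ and use that, by the Quillen equivalence, $\mathbb{R}\lim$ is an equivalence of homotopy categories.

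Concretely, I would first recall that for any right Quillen functor, its total right derived functor is computed by precomposing with any fibrant replacement. Thus $\mathbb{R}\lim(X) \cong \lim \widehat{X}$ and $\mathbb{R}\lim(Y) \cong \lim \widehat{Y}$ in $\Ho(\sset_*)$, and these identifications are natural in the sense that the image of $X\to Y$ under $\mathbb{R}\lim$ is represented, up to isomorphism in $\Ho(\sset_*)$, by the map $\lim \widehat{X}\to \lim \widehat{Y}$ obtained from a functorial fibrant replacement.

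Next, I would use the standard fact that a morphism in a model category is a weak equivalence if and only if it is an isomorphism in the associated homotopy category. Applying this in both $\Post(\sset_*)$ and $\sset_*$, the statement to prove becomes: $X\to Y$ is an isomorphism in $\Ho(\Post(\sset_*))$ if and only if $\mathbb{R}\lim(X\to Y)$ is an isomorphism in $\Ho(\sset_*)$. By the theorem just proved, $\mathbb{R}\lim$ is an equivalence of categories, hence in particular it both preserves and reflects isomorphisms. This yields the corollary.

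The argument is essentially formal once the Quillen equivalence is in hand; there is no genuine obstacle. The only point requiring slight care is the rigorous identification of $\lim\widehat{X}\to \lim\widehat{Y}$ with the value of $\mathbb{R}\lim$ on the morphism $X\to Y$, which amounts to invoking Ken Brown's lemma to see that $\lim$ preserves weak equivalences between fibrant objects (so different choices of fibrant replacement yield canonically weakly equivalent limits, and the construction is well defined on $\Ho$).
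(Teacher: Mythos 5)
Your argument is correct and is essentially the paper's own: both deduce the statement purely formally from the Quillen equivalence ${\rm const}:\sset_*\rightleftarrows\Post(\sset_*):\lim$, you by noting that $\mathbb{R}\lim$ is an equivalence of homotopy categories and hence preserves and reflects isomorphisms (with $\mathbb{R}\lim$ computed via fibrant replacement), the paper by writing down the explicit zigzag of weak equivalences through the counit ${\rm const}(\lim\widehat{X})\to\widehat{X}$ and invoking that ${\rm const}$ preserves and reflects weak equivalences between cofibrant objects. The two are the same argument in different packaging, and your one point of care (well-definedness of $\lim\widehat{X}\to\lim\widehat{Y}$ up to weak equivalence, via Ken Brown) is exactly the right one.
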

\begin{proof}
We have the following commutative diagram
$$
\xymatrix{
{\rm const}(\lim \widehat X) \ar[d]_g \ar[r]^-{\simeq} & \widehat X\ar[d] & \ar[l]_-{\simeq} X \ar[d]^f\\
{\rm const}(\lim \widehat Y) \ar[r]^-{\simeq} & \widehat Y & Y \ar[l]_-{\simeq}.
}
$$
The horizontal arrows are weak equivalences because they are either a fibrant replacement or because the Quillen pair ${\rm const}$ and $\lim$ is a Quillen equivalence. So $f$ is a weak equivalence if and only if $g$ is a weak equivalence. But since ${\rm const}$ preserves and reflects weak equivalences between cofibrant objects (because it is the left adjoint of a Quillen equivalence), it follows that $g$ is a weak equivalence if and only if $\lim \widehat X\to \lim \widehat Y$ is a weak equivalence.
\end{proof}

\subsection{Chromatic towers of localizations}

We can also use the homotopy limit model structure on towers of categories to
obtain a categorified version of yet another classical result. The chromatic
convergence theorem states that for a finite $p$-local spectrum $X$,
\[
X \simeq\holim_n L_nX
\]
where $L_n$ denotes left localization at the chromatic homology theory
$E(n)$ at a fixed prime $p$; see \cite[Theorem 7.5.7]{Rav92}. The prime $p$ is traditionally omitted from notation. We will see that the Quillen
adjunction between spectra and the left Quillen presheaf of chromatic
localizations of spectra induces an adjunction between the homotopy category
of finite spectra and the homotopy category of chromatic towers subject to a
suitable finiteness condition. The chromatic convergence theorem then shows
that the derived unit of this adjunction is a weak equivalence. By $\Sp$ in this section we always mean the category of \emph{$p$-local spectra symmetric spectra}~\cite{HSS} and the prime~$p$ will be fixed throughout the section.

Recall from~\cite[Section 6.1]{Hov99} that the homotopy category of a pointed model category supports
a suspension functor with a right adjoint loop functor defined via framings.
A model category is called \emph{stable} if it is pointed and the suspension and loop operators are inverse equivalences on the homotopy category. Every combinatorial stable model category admits an enrichment over the category of symmetric spectra via stable frames; see~\cite{Dug06} and \cite{Len12}.

Let $\C$ be a proper and combinatorial stable model
category. Given a prime~$p$, we define $L_n\C$ to be the left Bousfield localization of $\C$ with
respect to the $E(n)$\nobreakdash-equi\-va\-len\-ces, where $E(n)$ is considered at the prime $p$. By this, we mean Bousfield localisation at the set $I_\C \square \mathcal{S}_{E(n)}$, where $I_\C$ is the set of generating cofibrations of $\C$ and $\mathcal{S}_{E(n)}$ the generating acyclic cofibrations of $L_{E(n)}\Sp=L_n\Sp$. (The square denotes the pushout-product.) This defines a left Quillen presheaf
$$L_\bullet\C\colon \mathbb{N}^{op} \longrightarrow \CAT.$$
By Proposition \ref{prop:model_str_tow} we get the following.

\begin{proposition}
There is a left proper, combinatorial and stable model structure on the
category of sections $\Sect(\mathbb{N}, L_\bullet\C),$ such that a map
is a weak equivalence or a cofibration if and only if each
$$
f_n\colon X_n \longrightarrow Y_n
$$
is a weak equivalence or a cofibration in $L_n\C$, respectively. A map
$f_n\colon X_n \to Y_n$ is a fibration if and only if $f_0$ is a fibration in
$L_0\C$ and $$X_{n+1} \longrightarrow Y_{n+1} \times_{Y_n} X_n$$ is a
fibration in $L_{n+1}\C$ for all $n \ge 1$. \qed
\end{proposition}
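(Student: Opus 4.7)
The proof plan is to read off most of the statement as a direct application of Proposition~\ref{prop:model_str_tow} to the left Quillen presheaf $L_\bullet\C$, and then separately verify the two extra attributes that Proposition~\ref{prop:model_str_tow} does not explicitly produce, namely left properness and stability.

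The first step is to check that the hypotheses of Proposition~\ref{prop:model_str_tow} are satisfied. Each $L_n\C$ is defined as a left Bousfield localization of $\C$ at a set of morphisms, so it is combinatorial because left Bousfield localizations of combinatorial model categories at a set are again combinatorial (by the Smith/Hirschhorn machine, as recalled in \cite{Beke,Dug01,Lu}). Moreover, left Bousfield localizations preserve left properness, so each $L_n\C$ is left proper. The identity functor $L_{n+1}\C \to L_n\C$ is a left Quillen functor (one is localising at a larger set), so we indeed have a left Quillen presheaf. With this in hand, Proposition~\ref{prop:model_str_tow} produces a combinatorial model structure on $\Sect(\mathbb{N}, L_\bullet\C)$ with the stated weak equivalences and cofibrations, and with the matching-type description of the fibrations.

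The second step is left properness of $\Sect(\mathbb{N}, L_\bullet\C)$, which is immediate from Theorem~\ref{thm:sect_model_str}: pushouts and weak equivalences in the injective structure are computed levelwise, and each $L_n\C$ is left proper by the previous paragraph.

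The third step, and the main obstacle, is stability. This has two sub-steps. First, one must check that each $L_n\C$ is itself stable. This is not automatic for arbitrary left Bousfield localizations of stable model categories, but it holds here because the set of morphisms $I_\C \square \mathcal{S}_{E(n)}$ is built from the spectral enrichment coming from stable framings; equivalently, it is closed (up to weak equivalence) under the suspension functor on $\Ho(\C)$, so the localization is compatible with suspension and the derived suspension remains an auto-equivalence on $\Ho(L_n\C)$. Second, one needs that a category of sections of stable combinatorial model categories with the injective structure is again stable. This is straightforward, since suspension and loop functors in the injective structure are computed levelwise (both can be modelled by framings performed pointwise), and an inverse equivalence on each level yields an inverse equivalence on the section category. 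Combining the two sub-steps gives stability of $\Sect(\mathbb{N}, L_\bullet\C)$, which completes the proof.
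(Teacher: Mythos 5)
Your proposal is correct and follows essentially the same route as the paper, which simply invokes Proposition~\ref{prop:model_str_tow} for the model structure and then remarks that the result is stable because each $L_n\C$ is stable (citing the stability of $E(n)$-localization in the sense of Barnes--Roitzheim). You have merely filled in the standard details the paper leaves implicit: preservation of combinatoriality and left properness under left Bousfield localization, the nesting of $E(n)$-local equivalences making the identities left Quillen, and the levelwise computation of (co)fibrations, weak equivalences and suspensions in the injective structure.
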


Note that the resulting model structure is stable as each $L_n\C$ is stable.
We then perform a right Bousfield localization to obtain the homotopy limit
model structure. Note that this again results in a stable model
category~\cite[Proposition 5.6]{BR14} as this right localization is stable in
the sense of \cite[Definition 5.3]{BR14}. As left localization with respect
to $E(n)$ is also stable in the sense of \cite[Definition 4.2]{BR14}, $L_n\C$
is both left and right proper if~$\C$ is; see~\cite[Propositions 4.6 and
4.7]{BR14}. Hence, Proposition~\ref{prop:post_model} implies the following result.

\begin{proposition}
Let $\C$ be a proper, combinatorial and stable model category. There is a
model structure $\Chrom(\C)$ on $\Sect(\mathbb{N}, L_\bullet\C)$ with
the following properties.
\begin{itemize}
\item[{\rm (i)}] A morphism is a fibration in $\Chrom(\C)$ if and only if
    it is a fibration in $\Sect(\mathbb{N}, L_\bullet\C)$.
\item[{\rm (ii)}] An object $X_\bullet$ is cofibrant in $\Chrom(\C)$ if and only if
    all the $X_n$ are cofibrant in $\C$ and $X_{n+1} \to X_n$ is an
    $E(n)$-equivalence for each $n$.\qed
\end{itemize}
\end{proposition}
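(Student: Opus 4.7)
The plan is to obtain this as an immediate application of Proposition~\ref{prop:post_model} to the left Quillen presheaf $L_\bullet\C$. The main work is verifying the hypotheses of that result; once those are in hand, properties (i) and (ii) follow by translating its conclusions into chromatic language.

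First I would check combinatoriality and right properness of each $L_n\C$. Combinatoriality is preserved by left Bousfield localization at a set of maps, and here we are localizing at the set $I_\C \square \mathcal{S}_{E(n)}$. Right properness is asserted in the paragraph preceding the proposition: localization at $E(n)$ is stable in the sense of \cite[Definition 4.2]{BR14}, and stable left Bousfield localizations preserve right properness by \cite[Proposition 4.7]{BR14}. Stability of $L_n\C$ follows from stability of the underlying $\C$ together with stability of the Bousfield localization.

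Second I would verify the left Quillen presheaf structure. For $m > n$, the identity functor $L_m\C \to L_n\C$ is left Quillen with identity right adjoint because the cofibrations in both categories coincide with those of $\C$, and every $E(m)$-equivalence is an $E(n)$-equivalence (the chromatic nesting of localization functors). Thus $L_\bullet\C\colon \mathbb{N}^{\rm op}\to\CAT$ is a left Quillen presheaf satisfying the hypotheses of Proposition~\ref{prop:post_model}.

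Setting $\Chrom(\C) := \Tow(L_\bullet\C)$, property (i) is Proposition~\ref{prop:post_model}(i) verbatim. For (ii), Proposition~\ref{prop:post_model}(ii) characterizes cofibrant sections by levelwise cofibrancy in $L_n\C$ (equivalent to cofibrancy in $\C$) together with the map $f^*X_{n+1}\to X_n$ being a weak equivalence in $L_n\C$. Since each transition functor $f^*$ is the identity and the weak equivalences of $L_n\C$ are by construction the $E(n)$-equivalences, this condition becomes exactly the statement in (ii). The only delicate point in the argument is the right properness of $L_n\C$, which we are farming out to the stability results of \cite{BR14}; everything else is routine bookkeeping.
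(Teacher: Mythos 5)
Your proposal is correct and follows the paper's own route: the authors likewise deduce the result by applying Proposition~\ref{prop:post_model} to the left Quillen presheaf $L_\bullet\C$, with right properness of each $L_n\C$ supplied by the stability of $E(n)$-localization via \cite[Propositions 4.6 and 4.7]{BR14}. Your extra verifications (combinatoriality under left Bousfield localization, the chromatic nesting making the identities left Quillen, and the identification of cofibrant objects and weak equivalences after localization) are exactly the bookkeeping the paper leaves implicit.
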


The following is useful to justify the name ``homotopy limit model
structure''. Recall that $\Sp$ denotes here the category of $p$-local spectra.

\begin{lemma}\label{prop:htpyweq}
Let $f\colon X_\bullet \to Y_\bullet$ be a weak equivalence in $\Chrom(\Sp)$.
Then $$\holim X_\bullet \longrightarrow \holim Y_\bullet$$ is a weak
equivalence of spectra.
\end{lemma}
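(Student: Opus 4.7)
The strategy is to identify $\holim$ with the total right derived functor of $\lim\colon \Chrom(\Sp)\to \Sp$. Once this is established, the claim is formal: a weak equivalence in $\Chrom(\Sp)$ is an isomorphism in $\Ho(\Chrom(\Sp))$, and $\mathbb{R}\lim$, being a functor between homotopy categories, carries it to an isomorphism in $\Ho(\Sp)$, which is exactly a weak equivalence of spectra.

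The substantive step is therefore to check that $({\rm const},\lim)\colon \Sp\lradjunction \Chrom(\Sp)$ is a Quillen pair, by an argument parallel to Lemma~\ref{lem:post_quillen_pair}. Every weak equivalence in $\Sp$ is a weak equivalence in each left Bousfield localization $L_n\Sp$, so ${\rm const}$ sends a trivial cofibration in $\Sp$ to a levelwise trivial cofibration in $\Sect(\mathbb{N},L_\bullet\Sp)$; and $\Sect(\mathbb{N},L_\bullet\Sp)$ and $\Chrom(\Sp)$ share fibrations, hence also their classes of trivial cofibrations, determined by the left lifting property. For a cofibration $f\colon X\to Y$ between cofibrant objects of $\Sp$, both ${\rm const}(X)$ and ${\rm const}(Y)$ are cofibrant in $\Chrom(\Sp)$ (their structure maps are identities, hence trivially $E(n)$-equivalences), so by \cite[Proposition~3.3.16(2)]{Hir03} it suffices to observe that ${\rm const}(f)$ is a cofibration in $\Sect(\mathbb{N},L_\bullet\Sp)$, which is immediate from the levelwise definition in the injective model structure.

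Finally, I would note that fibrant replacement in $\Chrom(\Sp)$ may be chosen to coincide with fibrant replacement in $\Sect(\mathbb{N},L_\bullet\Sp)$, since the two structures share both fibrations and trivial cofibrations. Hence $\mathbb{R}\lim(X_\bullet)$ is computed as $\lim$ of a fibrant replacement of $X_\bullet$ in $\Sect$, which recovers the usual formulation of the homotopy limit of a tower of spectra. The only real obstacle is the verification that ${\rm const}$ is left Quillen into $\Chrom(\Sp)$, but this is routine; everything else is the formalism of derived functors.
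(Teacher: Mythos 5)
Your proof is correct, and it takes a mildly different route from the paper's. You reduce the statement to the purely formal observation that the total right derived functor $\mathbb{R}\lim\colon \Ho(\Chrom(\Sp))\to\Ho(\Sp)$, being a functor, carries the isomorphism $[f]$ to an isomorphism, and that a map of spectra which becomes invertible in $\Ho(\Sp)$ is a weak equivalence. The paper instead tests $f$ against the representable functors $\Ho(\Chrom(\Sp))({\rm const}(A),-)$ for cofibrant $A$, transports these along the derived adjunction to $[A,\holim(-)]$, and concludes because the cofibrant spectra detect isomorphisms in the stable homotopy category. The substantive content is the same in both cases --- everything rests on $({\rm const},\lim)$ being a Quillen pair, which you verify exactly as in Lemma~\ref{lem:post_quillen_pair} and which the paper simply cites --- but the two formulations buy different things. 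Yours is shorter and makes explicit a point the paper leaves implicit, namely that $\mathbb{R}\lim$ computed in $\Chrom(\Sp)$ coincides with the classical homotopy limit of the underlying tower of spectra, because $\Chrom(\Sp)$ shares its fibrations and trivial cofibrations (hence its fibrant replacements) with $\Sect(\mathbb{N},L_\bullet\Sp)$. The paper's representable formulation is chosen deliberately: it isolates exactly which additional hypothesis (a set of generators of the form ${\rm const}(G)$ detecting weak equivalences) would upgrade the lemma to an if-and-only-if, which is precisely the content of Remark~\ref{rem:generators}; that observation is not visible from the purely functorial argument.
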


\begin{proof}
Let $f\colon X_\bullet \to Y_\bullet$ be a weak equivalence in $\Chrom(\Sp)$.
This implies that
\[
\Ho(\Chrom(\Sp))({\rm const}(A), X_\bullet)\longrightarrow \Ho(\Chrom(\Sp))({\rm const}(A), Y_\bullet)
\]
is an isomorphism for all cofibrant $A \in \Sp$. By Lemma
\ref{lem:post_quillen_pair}, $({\rm const}, \lim)$ is a Quillen pair, so the
above is equivalent to the claim that
\[
[A, \holim X_\bullet] \longrightarrow [A, \holim Y_\bullet]
\]
is an isomorphism for all cofibrant $A \in \C$, where the square brackets
denote morphisms in the stable homotopy category. But as the class of all
cofibrant spectra detects isomorphisms in the stable homotopy category, this
is equivalent to $$\holim X_\bullet \longrightarrow \holim Y_\bullet$$ being
a weak equivalence of spectra as desired.
\end{proof}

\begin{rmk}\label{rem:generators}
It is important to note that we do not know if the converse is true. Looking
at the proof of this lemma, we see that the following are equivalent:
\begin{itemize}
\item[{\rm (i)}] There is a set of objects of the form ${\rm const}(G)$ in
    $\Chrom(\Sp)$ that detect weak equivalences.
\item[{\rm (ii)}] The weak equivalences in $\Chrom(\Sp)$ are precisely the
    $\holim$-isomorphisms.
\end{itemize}
Unfortunately, it is not known from the definition of the homotopy limit
model structure whether any of those equivalent conditions hold.
\end{rmk}

We can now turn to the main result of this subsection. For this, we need to
specify our finiteness conditions. Recall that a $p$-local spectrum is
called \emph{finite} if it is in the full subcategory of the stable homotopy
category $\Ho(\Sp)$ which contains the sphere spectrum and is closed under
exact triangles and retracts. We denote this full subcategory by
$\Ho(\Sp)^{\fin}$.

\begin{definition}
We call a diagram $X_\bullet$ in $\Chrom(\Sp)$ \emph{finitary} if $\holim
X_\bullet$ is a finite spectrum. By $\Ho(\Chrom(\Sp))^F$ we denote the full
subcategory of the finitary diagrams in the homotopy category of
$\Chrom(\Sp)$.
\end{definition}

\begin{theorem}\label{thm:chromatic}
The Quillen adjunction ${\rm const}:\Sp\rightleftarrows
\Chrom(\Sp):\lim$ induces an adjunction
$$
\Ho(\Sp)^{\fin} \lradjunction \Ho(\Chrom(\Sp))^F
$$
and the derived unit is a weak equivalence.
\end{theorem}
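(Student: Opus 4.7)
The plan is to reduce the statement to the chromatic convergence theorem~\cite[Theorem 7.5.7]{Rav92}. First I would verify that the two derived functors restrict to the indicated full subcategories. For $\mathbb{R}\lim$ this is tautological: finitary is defined precisely so that $\holim X_\bullet$ lies in $\Ho(\Sp)^{\fin}$. For $\mathbb{L}{\rm const}$ note that if $X\in\Sp$ is cofibrant, then ${\rm const}(X)$ is automatically cofibrant in $\Chrom(\Sp)$, since every level is $X$ (cofibrant in $\Sp$) and every structure map is the identity, which is an $E(n)$-equivalence. That ${\rm const}(X)$ is moreover finitary when $X$ is finite will drop out of the main computation of the derived unit.

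Next I would identify the derived unit explicitly. Since $\Chrom(\Sp)$ is a right Bousfield localization of $\Sect(\mathbb{N}, L_\bullet\Sp)$, it has the same fibrations and therefore the same fibrant objects. By Proposition~\ref{prop:model_str_tow}, a fibrant replacement $Z_\bullet$ of ${\rm const}(X)$ is a tower with $Z_n$ fibrant in $L_n\Sp$ (i.e.\ $E(n)$-local) and compatible structure maps that are fibrations. Up to levelwise weak equivalence one may take $Z_n=L_nX$, so $Z_\bullet$ is the chromatic tower of $X$. The derived unit is therefore the canonical map
\[
X\longrightarrow \lim Z_\bullet \;\simeq\; \holim\nolimits_n L_nX.
\]
The chromatic convergence theorem then says that this map is a weak equivalence for every finite $p$-local spectrum $X$. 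In particular $\holim {\rm const}(X)\simeq X$ is finite, confirming that $\mathbb{L}{\rm const}$ lands in $\Ho(\Chrom(\Sp))^F$.

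Once both derived functors are known to restrict to the respective full subcategories, the adjunction isomorphism restricts automatically (full subcategories inherit the Hom\nobreakdash-sets), giving the desired adjunction $\Ho(\Sp)^{\fin}\lradjunction\Ho(\Chrom(\Sp))^F$; the derived unit on the restricted subcategories is the same map as above and hence a weak equivalence. The main technical step is the identification of a fibrant replacement of ${\rm const}(X)$ in $\Chrom(\Sp)$ with the chromatic tower $\{L_nX\}_{n\ge0}$: this is a bookkeeping matter using the descriptions of the fibrations in $\Sect(\mathbb{N}, L_\bullet\Sp)$ and the fact that right Bousfield localization preserves fibrations. Once this is settled, the theorem is a direct consequence of Ravenel's result.
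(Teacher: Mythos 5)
Your proposal is correct and follows essentially the same route as the paper: both reduce the restriction of the adjunction and the derived unit to the chromatic convergence theorem, with $\mathbb{R}\lim$ landing in finite spectra by the definition of finitary and $\mathbb{L}{\rm const}$ landing in finitary diagrams because $\holim_n L_nX\simeq X$ for finite $X$. The only difference is that you spell out the identification of a fibrant replacement of ${\rm const}(X)$ with the chromatic tower $\{L_nX\}$, which the paper leaves implicit; this is a harmless (and correct) elaboration.
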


\begin{proof}
Firstly, we notice that the derived adjunction
\[
\mathbb{L}{\rm const}:\Ho(\Sp) \lradjunction \Ho(\Chrom(\Sp)):\mathbb{R}\lim=\holim
\]
restricts to an adjunction
\[
\mathbb{L}{\rm const}:\Ho(\Sp)^{fin} \lradjunction
\Ho(\Chrom(\Sp))^F: \mathbb{R}\lim=\holim.
\]
By definition, the homotopy limit of each finitary diagram is assumed to be a
finite spectrum. On the other side,
$$\holim(\mathbb{L}{\rm const}(X))\simeq X$$
is exactly the chromatic convergence theorem for finite spectra. The derived
unit of the above adjunction is a weak equivalence. For a cofibrant spectrum
\[
X \longrightarrow ( \holim({\rm const}(X))=\holim_n L_nX)
\]
is again the chromatic convergence theorem.
\end{proof}

We would really like to show that the above adjunction is an equivalence of
categories, that is, that the counit is a weak equivalence, meaning that
\[
{\rm const}(\holim Y_\bullet) \longrightarrow Y_\bullet
\]
is a weak equivalence for $Y_{\bullet}$ a fibrant and cofibrant finitary
diagram in $\Chrom(\Sp)$. However, to show this we would need to know that
the weak equivalences in $\Chrom(\Sp)$ are exactly the holim-isomorphisms;
see Remark~\ref{rem:generators}. Furthermore, we would not just have to know that
$\Chrom(\Sp)$ has a constant set of generators but also that those generators
are finitary,  that is, the homotopy limit of each generator is finite.

\subsection{Convergence of towers}
Let $\C$ be a left proper combinatorial model structure such that
the model structures $P_n\C$ of $n$-types (see Section~\ref{sec:postnikovtowers}) are right proper, and hence the model structure $\Post(\C)$ exists. In this section we are going to take a closer look at what it means for a
tower in $\Post(\C)$ to converge. Recall that we have a Quillen adjunction
\[
{\rm const}: \C \lradjunction \Post(\C): \lim.
\]
The following terminology appears in \cite[Definition 5.35]{Bar10}.
\begin{definition}
The model category $\C$ is \emph{hypercomplete} if the derived left adjoint of the previous Quillen adjunction is full and faithful, that is, if the composite
\[
\Ho(\C) \xrightarrow{\mathbb{L}{\rm const}} \Ho(\Post(\C)) \xrightarrow{\holim} \Ho(\C)
\]
is isomorphic to the identity.
\end{definition}
 We have seen in Section
\ref{sec:postnikovtowers} that this is true for $\C=\sset_*$. We have also seen
in Theorem \ref{thm:chromatic} that, under a finiteness assumption, the
chromatic tower of spectra $\Chrom(\Sp)$ is hypercomplete in this sense. We
can also consider the case of left Bousfield localizations of $\sset_*$, that
is,  $\C=L_\calS \sset_*$. In general, this model category will not be
hypercomplete. Let $X$ be fibrant in $L_\calS \sset_*$, that is, fibrant as a
simplicial set and $\calS$-local. If we take a fibrant replacement of the
constant tower $\rm const(Y)$ in $\Post(L_\calS \sset_*)$, we obtain a tower
\[
({\rm const}(Y))^{\fib}=(\cdots\longrightarrow Y_n \longrightarrow Y_{n-1} \longrightarrow\cdots\longrightarrow Y_0)
\]
such that all the $Y_i$ are $\calS$-local, $Y_i$ is $P_i$-local for all $i$
and $Y_n \to Y_{n-1}$ is a weak equivalence in $P_{n-1}L_\calS \sset_*$.
However, this is not a fibrant replacement of $\rm const(Y)$ in
$\Post(\sset_*)$, unless $L_\calS$ commutes with all the localizations $P_n$.
In this case, a Postnikov tower in $L_\calS\sset_*$ is also a Postnikov tower
in $\sset_*$, and hypercompleteness holds. This would be the case for
$L_\calS=L_{MR}$ for $R$ a subring of the rational numbers $\mathbb{Q}$, but
it cannot be expected in general.

Let us recapture the classical case to get a more general insight into
hypercompleteness. For $X$ in $\sset_*$ we know that $X \to\lim_n P_nX$ is a
weak equivalence. This is equivalent to saying that for all $i$,
\[
\pi_i(X) \longrightarrow \pi_i(\lim_n P_nX)
\]
is an isomorphism of groups. But we have also seen that
\[
\pi_i(\lim_nP_nX) = \lim_n\pi_i(P_nX)
\]
as well as
\[
\pi_i(P_nX)= \left\{
\begin{array}{ccc}
\pi_i(X) &\mbox{ if } & i \le n, \\
0 & \mbox{ if } & i > n. \\
 \end{array}
 \right.
\]
Putting this together we get that, indeed, $\pi_i(\lim_nP_nX) \cong \pi_i(X)$
for all $i$. This is a special case of the following. A set of \emph{homotopy generators} for a model
category $\C$ consists of a small full subcategory $\mathcal{G}$ such that
every object of $\C$ is weakly equivalent to a filtered homotopy colimit of
objects of $\mathcal{G}$, and that by \cite[Proposition 4.7]{Dug01} every
combinatorial model category has a set of homotopy generators that can be
chosen to be cofibrant. Let $\C$ be a proper
combinatorial model category with a set of homotopy generators $\G$ and
homotopy function complex $\map_\C(-,-)$. Then, for a cofibrant $X$, the map
$X \to \holim_nP_nX$ is a weak equivalence in $\C$ if and only if
$$\map_\C(G,X) \longrightarrow \map_\C(G,\holim_nP_nX) =
\holim_n\map_\C(G,P_nX)$$ is a weak equivalence in $\sset$ for all $G \in \G$, where the equality holds by~\cite[Theorem~19.4.4(2)]{Hir03}.

So from this we can see that if we had $\map_\C(G, P_nX)\cong
P_n\map_\C(G,X)$ for all $G $ in $\G$, then we would get the desired weak
equivalence because again
\[
\pi_i \map_\C(G,P_nX) = \pi_i(P_n\map_\C(G,X)).
\]
We could also reformulate this statement by not using the full set of
generators $\G$, since we are only making use of the fact that they detect
weak equivalences.

\begin{proposition}
Let $h\G$ be a set in $\C$ that detects weak equivalences. If
\[
\map_\C(G, P_nX)\cong P_n\map_\C(G,X)
\]
for every $G$ in $h\G$, then $\C$ is hypercomplete. \qed
\end{proposition}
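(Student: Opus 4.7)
The plan is to reduce hypercompleteness of $\C$ to the hypercompleteness of $\sset_*$, which was established earlier in the section. Concretely, I want to show that for every cofibrant $X\in\C$ the canonical map $X\to\holim_n P_nX$ is a weak equivalence in $\C$. Because $h\G$ detects weak equivalences by assumption, it suffices to check that the induced map on $\map_\C(G,-)$ is a weak equivalence in $\sset_*$ for every $G\in h\G$.

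First I would rewrite the target using the fact that homotopy function complexes commute with homotopy limits in the second variable (this is the equality from \cite[Theorem~19.4.4(2)]{Hir03} already invoked in the paragraph preceding the statement), giving
\[
\map_\C(G,\holim_n P_nX) \simeq \holim_n \map_\C(G,P_nX).
\]
Next I would apply the hypothesis $\map_\C(G,P_nX)\cong P_n\map_\C(G,X)$ levelwise to identify the right-hand side with $\holim_n P_n\map_\C(G,X)$. Under these identifications, the map we are testing becomes
\[
\map_\C(G,X)\longrightarrow \holim_n P_n\map_\C(G,X),
\]
that is, the derived unit of the adjunction $\mathrm{const}\dashv\lim$ for $\Post(\sset_*)$ applied to $\map_\C(G,X)$. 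By the Quillen equivalence $\mathrm{const}\colon \sset_*\rightleftarrows\Post(\sset_*)\colon\lim$ established in Section~2.1, this map is a weak equivalence of simplicial sets, which is precisely what we needed.

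The main obstacle, although a minor bookkeeping matter, is ensuring that all the identifications above are genuinely homotopy-invariant. The symbol $P_nX$ should be understood as the value of a (levelwise) fibrant replacement in $P_n\C$, so that $\map_\C(G,P_nX)$ really computes the correct homotopy function complex and the isomorphism $\map_\C(G,P_nX)\cong P_n\map_\C(G,X)$ takes place between fibrant simplicial sets; similarly the pointwise fibrancy is what allows $\holim_n\map_\C(G,P_nX)$ to be computed as an ordinary limit. With a cofibrant choice of $G\in h\G$ and the usual fibrant replacements in each $P_n\C$, these compatibilities are standard, and the argument goes through.
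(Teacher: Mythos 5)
Your proposal is correct and follows essentially the same route as the paper: the proposition is stated with its proof contained in the preceding paragraph, which likewise tests $X\to\holim_n P_nX$ against $\map_\C(G,-)$ for $G$ in the detecting set, commutes the homotopy function complex past the homotopy limit via \cite[Theorem~19.4.4(2)]{Hir03}, applies the hypothesis levelwise, and concludes by Postnikov convergence for simplicial sets. Your only cosmetic deviation is citing the Quillen equivalence $\mathrm{const}:\sset_*\rightleftarrows\Post(\sset_*):\lim$ where the paper instead recalls the computation of $\pi_i(\lim_n P_nY)$ directly; these amount to the same thing.
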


We can follow this through with a non-simplicial example, bounded chain
complexes of $\mathbb{Z}$-modules $\Ch_b(\mathbb{Z})$. Let us briefly recall
Postnikov sections of chain complexes, which are discussed in detail in
\cite[Section 3.4]{GR14a}. As mentioned in Section~\ref{sec:postnikovtowers},
$P_n\Ch_b(\mathbb{Z})$ is the left Bousfield localization of
$\Ch_b(\mathbb{Z})$ at
\[
W_k=I_{\Ch_b(\mathbb{Z})}\square \{f_k: S^{k+1} \longrightarrow D^{k+2}\}.
\]
The generating cofibrations of the projective model structure of
$\Ch_b(\mathbb{Z})$ are the inclusions
\[
I_{\Ch_b(\mathbb{Z})}=\{ \mathbb{S}^{n-1} \longrightarrow \mathbb{D}^n \,\,|\,\, n \ge 1\}
\]
where $\mathbb{S}^{n-1}$ is the chain complex which only contains
$\mathbb{Z}$ in degree $n-1$ and is zero in all other degrees, and
$\mathbb{D}^n$ is $\mathbb{Z}$ in degrees $n$ and $n-1$ with the identity
differential and zero everywhere else. We can thus work out that
\[
W_k=\{ \mathbb{S}^{n+k+1} \longrightarrow \mathbb{D}^{n+k+2}\,\,|\,\, n \ge 0\}.
\]
This means that a chain complex is a $k$-type if and only if its homology
vanishes in degrees $k+1$ and above. The localization $M \longrightarrow
P_kM$ is simply truncation above degree $k$.

\bigskip
Let $\Hom(M, N)$ denote the mapping chain complex for $M$, $N$ in
$\Ch_b(\mathbb{Z})$, that is,
\[
\Hom(M, N)_k=\prod_i \Hom_\mathbb{Z}(M_i, N_{i+k})
\]
with differential $(df)(x)=d(f(x))+(-1)^{k+1}f(d(x))$; see for example
\cite[Chapter ~4.2]{Hov99}. We note that
\[
\pi_i(\map_{\Ch_b(\mathbb{Z})}(M,N)) = H_i(\Hom(M, N))
\]
because
\begin{multline}
\pi_i(\map_{\Ch_b(\mathbb{Z})}(M, N))=[S^i, \map_{\Ch_b(\mathbb{Z})}(M,
N)]_{\sset_*} = [M\otimes^L S^i, N]_{\Ch_b(\mathbb{Z})}\nonumber\\ = [M[i],
N]_{\Ch_b(\mathbb{Z})}=[M\otimes \mathbb{\mathbb{Z}}[i],
N]_{\Ch_b(\mathbb{Z})} = [\mathbb{\mathbb{Z}}[i], \Hom(M,
N)]_{\Ch_b(\mathbb{Z})} \nonumber\\ = H_i(\Hom(M, N)).
\end{multline}
So $\Ch_b(\mathbb{Z})$ is hypercomplete if $\Hom(G, P_nN)$ is
quasi-isomorphic to $P_n\Hom(G, N)$ for all $G$ in $h\G$. For bounded below
chain complexes, a set that detects weak equivalences can be taken to be
\[
h\G= \{ \mathbb{S}^i=\mathbb{Z}[i] \mid i \ge 0\}.
\]
We have the following diagram of short exact sequences:
\[
\xymatrix@C-=0.5cm{ \Ext_\mathbb{Z}(H_i(M), H_{i+1}(N)) \ar[r]\ar[d] &
H_i(\Hom(M, N)) \ar[r]\ar[d] & \Hom_\mathbb{Z}(H_i(M), H_i(N)) \ar[d]\\
\Ext_\mathbb{Z}(H_i(M), H_{i+1}(P_n N)) \ar[r] & H_i(\Hom(M,P_n N))
 \ar[r] & \Hom_\mathbb{Z}(H_i(M), H_i(P_n N)).
}
\]

Using the 5-lemma we can read off that $H_i(\Hom(M, P_n N))=0$ for $i >n$ as
desired and that $$H_i(\Hom(M, P_n N))=H_i(\Hom(M, N))$$ for $i \le n-1$, but
unless $\Ext_\mathbb{Z}(H_n(M), H_{n+1}(N))=0$ we do \emph{not} get that
$$H_n(\Hom(M, P_n N))=H_n(\Hom(M, N)).$$ Note that in general it is not true that $\Hom(M, P_n N)\simeq P_n\Hom(M,
N).$ However, as we only require the case
$M=\mathbb{S}^i$, we have that
\[
\Hom(\mathbb{S}^i, N)= N[n],
\]
where $N[n]$ is the $n$-fold suspension of $N$. Thus,
\[
\Hom(G, P_n N)= P_n \Hom(G, N)
\]
for all $G$ in $h\G$, so $\Ch_b(\mathbb{Z})$ is hypercomplete as expected.

\begin{rmk}
Another important example of a tower of model structures occurring in nature is given by
the Taylor tower of Goodwillie calculus, where for every $n$ one considers the $n$-excisive model structure on the category of small endofunctors of simplicial sets; see~\cite[Section 4]{BCR07}. We do not discuss this example in this paper, and detailed relations to the aforementioned references could be a topic for future research.
\end{rmk}

\section{Homotopy fibered products of model categories}
Let $\mathcal{I}$ be the small category
$$
1\stackrel{\alpha}{\longleftarrow} 0\stackrel{\beta}{\longrightarrow}  2.
$$
A \emph{pullback diagram of model categories} is a left Quillen presheaf
$F\colon \mathcal{I}^{\rm op}\to \CAT$. The objects $X_\bullet$ of the
category of sections  are given by three objects $X_0, X_1$ and  $X_2$ in
$F(0), F(1)$ and $F(2)$, respectively, together with morphisms
$$
\alpha^*X_1\longrightarrow X_0\longleftarrow \beta^*X_2
$$
in $F(0)$. A morphism $\phi_\bullet\colon X_\bullet\to Y_\bullet$ consists of
morphisms $\phi_i\colon X_i\to Y_i$ in $F(i)$ for $i=0,1,2$, such that the
diagram
$$
\xymatrix{
\alpha^*X_1 \ar[r]\ar[d]^{\alpha^*\phi_1} & X_0  \ar[d]^{\phi_0} & \beta^* X_2
\ar[l]\ar[d]^{\beta^*\phi_2} \\
\alpha^*Y_1 \ar[r] & Y_0 & \beta^*Y_2 \ar[l]
}
$$
commutes.

\begin{proposition}
Let $F\colon \mathcal{I}^{\rm op}\to \CAT$ be a pullback diagram of model
categories such that $F(i)$ is a combinatorial model category for every $i$
in $\mathcal{I}$. Then there exists a combinatorial model structure on the
category of sections $\Sect(\mathcal{I}, F)$, where a map
$\phi_{\bullet}$ is a weak equivalence or a cofibration if and only if $\phi_i$ is a weak
equivalence or cofibration in $F(i)$ for every $i$ in $\mathcal{I}$. The
fibrations are the maps $\phi_\bullet\colon X_\bullet\to Y_\bullet$ such that
$f_{0}$ is a fibration in $F(0)$ and
$$
X_1\longrightarrow Y_1\times_{\alpha_*Y_{0}}\alpha_*X_{0} \quad\mbox{and}\quad X_2
\longrightarrow Y_2\times_{\beta_*Y_{0}}\beta_*X_{0}
$$
are fibrations in $F(1)$ and $F(2)$, respectively. In particular, $X_\bullet$
is fibrant if $X_i$ is fibrant in $F(i)$ and
$$
X_1\longrightarrow \alpha_*X_{0}\quad\mbox{and}\quad X_2\longrightarrow \beta_*X_{0}
$$
are fibrations in $F(1)$ and $F(2)$, respectively \label{prop:model_str_pull}.
\end{proposition}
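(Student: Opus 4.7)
The plan mirrors that of Proposition~\ref{prop:model_str_tow}. First, the existence of the combinatorial injective model structure on $\Sect(\mathcal{I}, F)$ with levelwise weak equivalences and cofibrations is immediate from Theorem~\ref{thm:sect_model_str} applied to $F\colon \mathcal{I}^{\rm op}\to \CAT$, so only the characterization of fibrations requires further argument.

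For the fibration characterization, I would view $\mathcal{I}$ as a Reedy category with $\deg(0)=0$ and $\deg(1)=\deg(2)=1$, so that both non-identity morphisms $0\to 1$ and $0\to 2$ are direct. Then, exactly as in Proposition~\ref{prop:model_str_tow}, I would invoke~\cite[Theorem~3.1]{GS13}, which describes injective fibrations of sections of a left Quillen presheaf in terms of relative matching maps computed via the right adjoints of the presheaf. At the minimal object~$0$, the matching object is terminal, so the condition reduces to $\phi_0$ being a fibration in $F(0)$. At positions~$1$ and~$2$, the matching objects for a section $X_\bullet$ are $\alpha_*X_0$ and $\beta_*X_0$, yielding exactly the pullbacks
\[
Y_1\times_{\alpha_*Y_0}\alpha_*X_0 \qquad \text{and} \qquad Y_2\times_{\beta_*Y_0}\beta_*X_0
\]
against which $X_1$ and $X_2$ must map by fibrations in $F(1)$ and $F(2)$, respectively. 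The description of fibrant objects then follows by taking $Y_\bullet$ to be the terminal section: since the right adjoints $\alpha_*$ and $\beta_*$ preserve terminal objects, the above pullbacks collapse to $\alpha_*X_0$ and $\beta_*X_0$, recovering the stated condition.

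The main technical point, if any, is to confirm that~\cite[Theorem~3.1]{GS13}---originally formulated for towers---applies verbatim to the pullback shape $\mathcal{I}$. This is essentially formal, as the argument there only exploits the direct Reedy filtration of the indexing category together with the right adjoints $f_*$ supplied by the Quillen presheaf, both of which are available in the present setting. Alternatively, one can give a self-contained derivation by testing the right lifting property against the generating trivial cofibrations of each $F(i)$ transported to $\Sect(\mathcal{I}, F)$ via left Kan extension along the inclusions $\{i\}\hookrightarrow\mathcal{I}$, and using the $(\alpha^*,\alpha_*)$ and $(\beta^*,\beta_*)$ adjunctions to translate the lifting conditions into fibrations of the stated relative matching maps.
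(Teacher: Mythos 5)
Your proposal is correct and takes essentially the same route as the paper, whose proof consists precisely of deducing existence from Theorem~\ref{thm:sect_model_str} and the fibration description from~\cite[Theorem~3.1]{GS13}. Your elaboration via the direct Reedy structure on $\mathcal{I}$ and the matching objects $\alpha_*X_0$, $\beta_*X_0$ simply spells out what that citation provides (and note that the cited result is not restricted to towers, so no extra verification is needed).
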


\begin{proof}
The existence of the required model structure follows from
Theorem~\ref{thm:sect_model_str}. The description of the fibrations follows
from~\cite[Theorem~3.1]{GS13}.
\end{proof}

\begin{proposition}
Let $F\colon \mathcal{I}^{\rm op}\to \CAT$ be a pullback diagram of model
categories such that $F(i)$ is combinatorial and right proper for every
$i$ in $\mathcal{I}$. Then there is a model structure $\Fibprod(F)$ on the
category of sections of $F$, called the \emph{homotopy fibered product model
structure}, with the following properties:
\begin{itemize}
\item[{\rm (i)}] A morphism $\phi_\bullet$ is a fibration in
    $\Fibprod(F)$ if and only if $\phi_{\bullet}$ is a fibration in
    $\Sect(\mathcal{I}, F)$.
\item[{\rm (ii)}] A section $X_\bullet$ is cofibrant in $\Fibprod(F)$ if and only if
    $X_i$ is cofibrant in $F(i)$ for every $i$ in $\mathcal{I}$ and the
    morphisms $\alpha^*X_{1} \to X_{0}$ and $\beta^*X_2\to X_{0}$ are
    weak equivalences in $F(0)$.
\item[{\rm (iii)}] A morphism $\phi_{\bullet}$ between cofibrant sections
    is a weak equivalence if and only if $\phi_i$ is a weak equivalence
    in $F(i)$ for every $i$ in $\mathcal{I}$.
\end{itemize}
\label{prop:model_str_hopull}
\end{proposition}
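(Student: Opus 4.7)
The plan is to mirror the proof of Proposition~\ref{prop:model_str_hotow} and simply invoke Theorem~\ref{thm:holim_model_str} directly. Since the indexing category $\mathcal{I}$ is $1 \leftarrow 0 \rightarrow 2$ and each $F(i)$ is assumed combinatorial and right proper, the hypotheses of that theorem are satisfied. I therefore define $\Fibprod(F)$ to be the homotopy limit model structure on $\Sect(\mathcal{I}, F)$, obtained as a right Bousfield localization of the injective model structure provided by Theorem~\ref{thm:sect_model_str}.

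Properties (i) and (ii) then follow immediately from the conclusion of Theorem~\ref{thm:holim_model_str}. Property (i) holds because right Bousfield localizations preserve the class of fibrations. For (ii), I need to unpack the definition of a homotopy cartesian section in this special case: the only non-identity morphisms of $\mathcal{I}$ are $\alpha\colon 0 \to 1$ and $\beta\colon 0 \to 2$, so the homotopy cartesian condition reduces to requiring that $\alpha^* Q_1 X_1 \to X_0$ and $\beta^* Q_2 X_2 \to X_0$ be weak equivalences in $F(0)$. For a section $X_\bullet$ that is already cofibrant in $\Sect(\mathcal{I}, F)$ each $X_i$ is cofibrant, so the cofibrant replacement maps $Q_i X_i \to X_i$ are weak equivalences and the condition simplifies to the statement appearing in (ii).

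For (iii), the characterization of weak equivalences between cofibrant objects is a standard feature of right Bousfield localizations: by~\cite[Proposition~3.3.16]{Hir03}, a map between colocal objects (equivalently, cofibrant objects in the localization) is a colocal weak equivalence if and only if it is a weak equivalence in the underlying model structure, and in $\Sect(\mathcal{I}, F)$ these are precisely the levelwise weak equivalences by Theorem~\ref{thm:sect_model_str}. I do not expect a serious obstacle in this argument; it is a direct transcription of the reasoning used for towers in Proposition~\ref{prop:model_str_hotow}, with $\mathbb{N}$ replaced by $\mathcal{I}$ and the successor morphism replaced by the two maps $\alpha$ and $\beta$. The only substantive point one must be attentive to is the translation between homotopy cartesianness (phrased using $Q_j X_j$) and the more down-to-earth formulation in (ii), which is only a clean statement once cofibrancy of the section is assumed.
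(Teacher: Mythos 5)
Your proposal is correct and follows essentially the same route as the paper: the existence and properties (i)--(ii) are obtained by applying Theorem~\ref{thm:holim_model_str} to the left Quillen presheaf $F$, and (iii) follows from the standard characterization of weak equivalences between cofibrant objects in a right Bousfield localization. The extra care you take in translating the homotopy cartesian condition (via Ken Brown's lemma applied to $f^*Q_jX_j \to f^*X_j$) is a detail the paper leaves implicit, but the argument is the same.
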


\begin{proof}
The existence of the model structure $\Fibprod(F)$ follows from
Theorem~\ref{thm:holim_model_str} applied to the left Quillen presheaf $F$.
The characterization of the weak equivalences between cofibrant objects
follows since $\Fibprod(F)$ is a right Bousfield localization of
$\Sect(\mathcal{I}, F)$.
\end{proof}

\subsection{Bousfield arithmetic squares of homological localizations}
Let $\C$ be a left proper combinatorial stable model category and $E$ any
spectrum. The model structure $L_E\C$ is the left Bousfield localization of $\C$ with respect to the
set $I_\C\square \mathcal{S}_E$. Here $I_\C$ is the set of
generating cofibrations of $\C$, the set $\mathcal{S}_E$ consists of the
generating trivial cofibrations of the homological localization $L_E\Sp$, and
$\square$ is the pushout-product defined via the action $\C\times \Sp\to \C$.
This model structure is an example of a left Bousfield localization along a Quillen bifunctor, as studied in~\cite{GR14a}.

Now, let $J$ and $K$ be a partition of the set of prime numbers. By
$\mathbb{Z}_J$ we denote the $J$-local integers, and by $MG$ the Moore
spectrum of the group $G$. Consider the model structures
$L_{M\mathbb{Z}_J}\C$, $L_{M\mathbb{Z}_K}\C$ and $L_{M\mathbb{Q}}\C$. Since,
for every set of primes $P$, every $M\mathbb{Z}_P$\nobreakdash-equivalence is
an $M\mathbb{Q}$-equivalence, the identities $L_{M\mathbb{Z}_J}\C\to
L_{M\mathbb{Q}}\C$ and $L_{M\mathbb{Z}_K}\C\to L_{M\mathbb{Q}}\C$ are left
Quillen functors.

Thus we have a pullback diagram of model categories $L_{\bullet}\C\colon
\mathcal{I}^{\rm op}\to \CAT$, where $\mathcal{I}=1\leftarrow 0\rightarrow 2$
and $L_{0}\C=L_{M\mathbb{Q}}\C$, $L_1\C=L_{M\mathbb{Z}_J}\C$ and
$L_2\C=L_{M\mathbb{Z}_K}\C$.

If $\C$ is a left proper combinatorial stable model category, then by
Proposition~\ref{prop:model_str_pull} the model structure
$\Sect(\mathcal{I}, L_{\bullet}\C)$ exists, and it is also a stable
model structure because each of the involved model categories is stable.

Moreover, if in addition the model structures $L_{M\mathbb{Z}_J}\C$,
$L_{M\mathbb{Z}_K}\C$ and $L_{M\mathbb{Q}}\C$ are right proper, then by
Proposition~\ref{prop:model_str_hopull} the model structure
$\Fibprod(L_\bullet\C)$, which we denote by $\Bou(\C)$, also exists. The model
structure $\Bou(\C)$ is also stable, since it is a right Bousfield
localization with respect to a set of stable objects;
see~\cite[Proposition~5.6]{BR14}.

\begin{lemma}
The adjunction ${\rm const}: \C\rightleftarrows \Bou(\C): \lim$ is
a Quillen pair.
\end{lemma}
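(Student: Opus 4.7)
The plan is to imitate the proof of Lemma \ref{lem:post_quillen_pair} verbatim, since the situation is formally identical: $\mathrm{Bou}(\mathcal{C})$ is obtained from $\mathrm{Sect}(\mathcal{I}, L_\bullet\mathcal{C})$ by a right Bousfield localization, and the constant diagram functor lands in the levelwise-cofibrant homotopy cartesian sections in a trivial way. So the strategy is to invoke \cite[Proposition 8.5.4(2)]{Hir03} and verify that $\mathrm{const}$ preserves trivial cofibrations and cofibrations between cofibrant objects.

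First I would handle trivial cofibrations. If $f$ is a trivial cofibration in $\mathcal{C}$, then $\mathrm{const}(f)$ has $f$ in each slot, and each identity functor $\mathcal{C} \to L_{M G}\mathcal{C}$ (for $G = \mathbb{Z}_J, \mathbb{Z}_K, \mathbb{Q}$) is left Quillen, so $f$ remains a trivial cofibration in each $L_{MG}\mathcal{C}$. Hence $\mathrm{const}(f)$ is a trivial cofibration in $\mathrm{Sect}(\mathcal{I}, L_\bullet\mathcal{C})$ by Proposition \ref{prop:model_str_pull}. Since $\mathrm{Bou}(\mathcal{C})$ is a right Bousfield localization of this section category, the two model structures share the same class of trivial cofibrations, so $\mathrm{const}(f)$ is a trivial cofibration in $\mathrm{Bou}(\mathcal{C})$ as well.

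Next I would handle cofibrations between cofibrant objects. Let $f\colon X\to Y$ be such a map in $\mathcal{C}$. Then $\mathrm{const}(f)$ is levelwise a cofibration between cofibrant objects, hence is a cofibration between cofibrant objects in $\mathrm{Sect}(\mathcal{I}, L_\bullet\mathcal{C})$. The key observation is that $\mathrm{const}(X)$ and $\mathrm{const}(Y)$ are cofibrant in $\mathrm{Bou}(\mathcal{C})$: by Proposition \ref{prop:model_str_hopull}(ii) this amounts to checking that the structure morphisms $\alpha^*\mathrm{const}(X)_1 \to \mathrm{const}(X)_0$ and $\beta^*\mathrm{const}(X)_2 \to \mathrm{const}(X)_0$ are weak equivalences in $L_{M\mathbb{Q}}\mathcal{C}$, but these maps are identities (since $\alpha^*$ and $\beta^*$ are identity functors on underlying categories). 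Now I invoke \cite[Proposition 3.3.16(2)]{Hir03}: in a right Bousfield localization, a morphism between cofibrant objects is a cofibration in the localized structure if and only if it is a cofibration in the original structure. Hence $\mathrm{const}(f)$ is a cofibration in $\mathrm{Bou}(\mathcal{C})$.

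There is no real obstacle here; the two verifications are direct applications of the characterizations in Proposition \ref{prop:model_str_hopull} together with two standard facts from Hirschhorn about (right) Bousfield localizations. The only subtlety worth flagging is that one must be careful that the functors $\alpha^*$, $\beta^*$ on constant diagrams act as identities, so that cofibrancy of $\mathrm{const}(X)$ in $\mathrm{Bou}(\mathcal{C})$ is automatic from cofibrancy of $X$ in $\mathcal{C}$; this is what makes the second half of the argument go through without additional hypotheses.
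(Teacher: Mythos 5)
Your proposal is correct and follows exactly the argument the paper intends: the paper's proof of this lemma simply says it is the same as that of Lemma~\ref{lem:post_quillen_pair}, and your write-up is precisely that argument transported to the pullback-diagram setting, using \cite[Proposition 8.5.4(2)]{Hir03} together with the characterization of cofibrant objects in Proposition~\ref{prop:model_str_hopull} and \cite[Proposition 3.3.16(2)]{Hir03}. No gaps.
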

\begin{proof}
The proof is the same as the one for Lemma~\ref{lem:post_quillen_pair}.
\end{proof}

Note that for any spectrum $E$, the model
structure $L_E\Sp$ is right proper~\cite[Proposition 4.7]{BR14}, hence the
model structure $\Bou(\Sp)$ exists.

\begin{theorem}\label{thm:Bousfieldsquare}
Let $\C$ be a proper and combinatorial stable model category. The Quillen pair ${\rm const}:\C\rightleftarrows \Bou(\C):\lim$
is a Quillen equivalence.
\end{theorem}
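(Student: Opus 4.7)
Following the pattern of the Postnikov analogue proved earlier, the plan is to apply \cite[Proposition 1.3.13]{Hov99} and verify that the derived unit and counit of the adjunction are weak equivalences, using Propositions~\ref{prop:model_str_pull} and~\ref{prop:model_str_hopull} to describe fibrant, cofibrant and weakly equivalent sections of $\Bou(\C)$. In particular, a fibrant-cofibrant $Y_\bullet$ consists of cofibrant $L_i$-local objects $Y_i\in\C$ together with fibrations $Y_1\to Y_0$ and $Y_2\to Y_0$ that are, moreover, $M\mathbb{Q}$-equivalences.

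\textbf{Derived unit.} For $X$ cofibrant in $\C$, the section ${\rm const}(X)=(X,X,X)$ is automatically cofibrant in $\Bou(\C)$ since each component is cofibrant and the structure maps are identities. A fibrant replacement ${\rm const}(X)\xrightarrow{\sim} Y_\bullet$ in $\Bou(\C)$ may be chosen with $Y_0\simeq L_{M\mathbb{Q}}X$, $Y_1\simeq L_{M\mathbb{Z}_J}X$ and $Y_2\simeq L_{M\mathbb{Z}_K}X$, arranged so that the structure maps are fibrations. The derived unit is then the comparison map $X\to \lim Y_\bullet = Y_1\times_{Y_0}Y_2$, and the assertion that it is a weak equivalence in $\C$ is precisely the arithmetic fracture square in $\C$.

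\textbf{Derived counit.} For $Y_\bullet$ fibrant-cofibrant in $\Bou(\C)$, let $QZ\xrightarrow{\sim}Z:=Y_1\times_{Y_0}Y_2$ be a cofibrant replacement in $\C$; right properness of $\C$ ensures that $Z$ computes the homotopy pullback. The derived counit ${\rm const}(QZ)\to Y_\bullet$ is a map between cofibrant objects of $\Bou(\C)$, so it suffices to check that each component $QZ\to Y_i$ is a weak equivalence in $L_i\C$. The case $i=0$ is direct: the map $Z\to Y_1$ is the pullback of the $M\mathbb{Q}$-equivalence $Y_2\to Y_0$ and is therefore itself an $M\mathbb{Q}$-equivalence in the stable setting, while $Y_1\to Y_0$ is an $M\mathbb{Q}$-equivalence by hypothesis. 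For $i=1,2$, I would apply the arithmetic fracture square to $QZ$ itself and use the $L_i$-locality of $Y_i$ to identify $L_i(QZ)\simeq Y_i$.

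\textbf{Main obstacle.} The crux of both halves is establishing the arithmetic fracture square in an arbitrary proper combinatorial stable model category $\C$, not merely in $\Sp$. I would transport Bousfield's classical result \cite{Bou79} using the enrichment of $\C$ over symmetric spectra via stable framings \cite{Dug06, Len12}: since each $L_{MG}\C$ is constructed by left Bousfield localization along a Quillen bifunctor (as in \cite{GR14a}), the $L_{MG}$-equivalences in $\C$ are detected by $MG$-equivalences on spectral mapping objects, so the fracture square for $\Sp$ transports term-by-term to $\C$.
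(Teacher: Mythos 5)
Your overall strategy (reduce to the derived unit and counit via \cite[Proposition 1.3.13]{Hov99}) is the same as the paper's, and you correctly isolate the crux: establishing the arithmetic fracture square in an arbitrary proper combinatorial stable $\C$. But the mechanism you propose for transporting it from $\Sp$ does not work. The weak equivalences of $L_{MG}\C$ are \emph{not} detected by applying spectral mapping objects $\Map^{\Sp}(C,-)$ and asking for $MG$-equivalences: $\Map^{\Sp}(C,-)$ is a right adjoint and does not commute with localization at a non-finite spectrum such as $M\mathbb{Q}$ or $M\mathbb{Z}_J$ (function spectra into $MG$-acyclic objects need not be $MG$-acyclic; compare $F(M\mathbb{Z}/p^{\infty},-)$ and rationalization), so the fracture square does not transport ``term-by-term'' this way. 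The correct tool --- the one the paper uses --- is the \emph{tensor} side of the enrichment: the three localizations are smashing, so by \cite[Lemma 6.7]{BR} one has $L_{MG}X\simeq X\wedge MG$ and a map $f$ is a weak equivalence in $L_{MG}\C$ if and only if $f\wedge MG$ is one in $\C$. The derived unit is then handled by smashing $X$ with Bousfield's pullback square $S\simeq\lim(M\mathbb{Z}_J\to M\mathbb{Q}\leftarrow M\mathbb{Z}_K)$ from \cite[Proposition 2.10]{Bou79}, using that $X\wedge-$ commutes with homotopy pullbacks because in a stable model category these agree with homotopy pushouts.

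Your counit argument also has a gap at $i=1,2$: ``apply the fracture square to $QZ$ and use $L_i$-locality of $Y_i$ to identify $L_i(QZ)\simeq Y_i$'' is circular, since identifying $L_i(QZ)$ with $Y_i$ presupposes exactly what is to be shown, namely that $QZ\to Y_i$ is an $L_i$-equivalence. The paper instead proves an auxiliary claim: if $A\to B$ is an $M\mathbb{Q}$-equivalence with $A$ fibrant in $L_{M\mathbb{Z}_K}\C$ and $B$ fibrant in $L_{M\mathbb{Q}}\C$, then $A\to B$ is already an $M\mathbb{Z}_J$-equivalence (a lifting argument, again resting on the smashing description of the equivalences). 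Applying this to $Y_2\to Y_0$ and pulling back along the fibration $Y_1\to Y_0$, right properness of $L_{M\mathbb{Z}_J}\C$ gives that $\lim Y_\bullet\to Y_1$ is an $M\mathbb{Z}_J$-equivalence; the case of $Y_2$ is symmetric, and the case $i=0$ then follows. You should replace your transport-by-mapping-objects step and your $i=1,2$ step with arguments of this kind.
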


\begin{proof}
By \cite[Proposition 1.3.13]{Hov99} it suffices to check that the derived
unit and counit are weak equivalences.

Let $X$ be a fibrant and cofibrant object in $\C$. We need to show that
\[
X \longrightarrow \lim({\rm const}(X)^{\fib})
\]
is a weak equivalence in $\C$, where $(-)^{\fib}$ denotes a fibrant
replacement in $\Bou(\C)$. The constant diagram ${\rm const}(X)$ is
cofibrant in $\Bou(\C)$ since ${\rm const}$ is a left Quillen functor. Let
$$
L_{M\mathbb{Z}_J}X\longrightarrow L_{M\mathbb{Q}}X \longleftarrow L_{M\mathbb{Z}_K}X
$$
be a fibrant replacement of ${\rm const}(X)$ in $\Bou(\C)$. We have that
$L_{M\mathbb{Z}_K}X$, $L_{M\mathbb{Z}_J}X$ and $L_{M\mathbb{Q}}X$ are fibrant
in $L_{M\mathbb{Z}_K}\C$, $L_{M\mathbb{Z}_J}\C$ and $L_{M\mathbb{Q}}\C$,
respectively, and the two maps are fibrations in $\C$ and weak equivalences
in $L_{M\mathbb{Q}}\C$. Furthermore, the three localisations are smashing in $\Sp$, so by \cite[Lemma 6.7]{BR}
\[
L_{M\mathbb{Z}_K}X= X \wedge M\mathbb{Z}_K, L_{M\mathbb{Q}}X= X \wedge M\mathbb{Q} \,\,\,\mbox{and}\,\,\, L_{M\mathbb{Z}_J}X=X \wedge M\mathbb{Z}_J.
\]
By \cite[Proposition~2.10]{Bou79} we have that
\[
\lim(M\mathbb{Z}_K \longrightarrow M\mathbb{Q} \longleftarrow M\mathbb{Z}_J) = S,
\]
where $S$ denotes the sphere spectrum. Thus, the map
$$
X\longrightarrow \lim (L_{M\mathbb{Z}_K}X\longrightarrow L_{M\mathbb{Q}}X\longleftarrow
L_{M\mathbb{Z}_J}X) = X \wedge \lim(M\mathbb{Z}_K \longrightarrow M\mathbb{Q} \longleftarrow M\mathbb{Z}_J)
$$
is a weak equivalence. The last equality follows because homotopy pullbacks commute with the action of spectra coming from framings, since in stable categories they are equivalent to homotopy pushouts.

Now, let $X_{\bullet}$ be any fibrant and cofibrant object in $\Bou(\C)$. We
have to see that the map
$$
{\rm const}(\lim X_{\bullet})\longrightarrow X_{\bullet}
$$
is a weak equivalence in $\Bou(\C)$. This is equivalent to saying that the
map $\lim X_{\bullet}\to X_1$ is a weak equivalence in
$L_{M\mathbb{Z}_J}\C$, $\lim X_{\bullet}\to X_2$ is a weak equivalence in
$L_{M\mathbb{Z}_K}\C$ and $\lim X_{\bullet}\to X_{12}$ is a weak equivalence
in $L_{M\mathbb{Q}}\C$.

Note that if $A\to B$ is a weak equivalence in
$L_{M\mathbb{Q}}\C$, $A$ is fibrant in $L_{M\mathbb{Z}_K}\C$ and $B$ is
fibrant in $L_{M\mathbb{Q}}\C$, then $A\to B$ is a weak equivalence in
$L_{M\mathbb{Z}_J}\C$. To see this, let $A\to L_{M\mathbb{Z}_J}A$ be a
fibrant replacement of $A$ in $L_{M\mathbb{Z}_J}\C$. We are going to use \cite[Lemma 6.7]{BR} again, which says that the weak equivalences in $L_{M\mathbb{Z}_J}\C$ are morphisms $f$ in $\C$ such that
$f \wedge M\mathbb{Z}_J$ is a weak equivalence in $\C$. This makes the following argument the same as it would be for $\C = \Sp$.

 Since $B$ is fibrant in
$L_{M\mathbb{Q}}\C$, it is so in $L_{M\mathbb{Z}_J}\C$. Thus, there is a
lifting
$$
\xymatrix{
A\ar[r] \ar[d] & B \\
L_{M\mathbb{Z}_J}A. \ar@{.>}[ur]
}
$$
The left arrow is a weak equivalence in $L_{M\mathbb{Z}_J}\C$ and hence a
weak equivalence in $L_{M\mathbb{Q}}\C$. Therefore the dotted arrow is a
weak equivalence in $L_{M\mathbb{Q}}\C$ between fibrant objects in
$L_{M\mathbb{Q}}\C$. (Observe that $L_{M\mathbb{Z}_J}A$ is fibrant in
$L_{M\mathbb{Z}_J}\C$ and $L_{M\mathbb{Z}_K}\C$, and hence in
$L_{M\mathbb{Q}}\C$.) Thus, it is a weak equivalence in $\C$. This
completes the proof of the claim since weak equivalences in $\C$ are weak
equivalences in $L_{M\mathbb{Z}_J}\C$.

Since $X_\bullet$ is fibrant and cofibrant, we have that in the pullback
diagram
$$
\xymatrix{
\lim X_\bullet \ar[r]^{f_2} \ar[d]_{f_1} & X_2 \ar[d] \\
X_1 \ar[r] & X_{12}
}
$$
$X_1$, $X_2$ and $X_{12}$ are fibrant in $L_{M\mathbb{Z}_J}\C$,
$L_{M\mathbb{Z}_K}\C$ and $L_{M\mathbb{Q}}\C$, respectively, and the right
and bottom arrows are weak equivalences in $L_{M\mathbb{Q}}\C$ and fibrations in
$L_{M\mathbb{Z}_K}\C$ and $L_{M\mathbb{Z}_J}\C$, respectively. By the
previous observation and right properness of the model structures involved,
the map $f_1\colon \lim X_\bullet\to X_1$ is a weak equivalence in
$L_{M\mathbb{Z}_J},$ and $f_2\colon\lim X_\bullet\to X_2$ is a weak equivalence
in $L_{M\mathbb{Z}_K}\C$, respectively. Thus, the map $\lim X_\bullet \to
X_{12}$ is also a weak equivalence in $M\mathbb{Q}$, which means that ${\rm
const}(\lim X_{\bullet})\longrightarrow X_{\bullet}$ is an objectwise weak
equivalence, and thus a weak equivalence in $\Bou(\C)$ as claimed.
\end{proof}

\begin{rmk}
There is a higher chromatic version of the objectwise statement. Here~$\Sp$ denotes the category
of $p$-local spectra. There is a
homotopy fiber square
\[
\xymatrix{
L_n X \ar[d]\ar[r] & L_{K(n)} X \ar[d] \\
L_{n-1}X \ar[r] & L_{n-1}L_{K(n)}X;
}
\]
see \cite[Section 3.9]{Dwyer}. However, we cannot apply the methods of this
section to get a result analogously to Theorem \ref{thm:Bousfieldsquare}.
This is due to the fact that $L_{K(n)}L_{n-1}\Sp$ is trivial as a model
category. (By \cite[Theorem 2.1]{Rav84}, a spectrum is $E(n-1)$-local if and
only if it is $K(i)$-local for $1 \le i \le n-1$. But the $K(n)$-localization
of a $K(m)$-local spectrum is trivial for $n \neq m$.) Consider the homotopy
fibered product model structure on
\[
L_{n-1}\Sp \longrightarrow L_{n-1}L_{K(n)}\Sp \longleftarrow L_{K(n)}\Sp.
\]
A fibrant and cofibrant diagram
\[
X_1 \xrightarrow{f_1} X_0 \xleftarrow{f_2} X_2
\]
would have to satisfy that $X_1$ is $E(n-1)$-local and $f_1$ is an
$L_{n-1}L_{K(n)}$ localization. By the universal property of localizations,
this means that $f_1$ factors over $L_{n-1}L_{K(n)}X_1 \to X_0$.
However, as $X_1$ is $E(n-1)$-local and thus $K(n)$-acyclic, this map (and
thus $f_1$) is trivial. Thus we cannot reconstruct a pullback square like the
above from this model structure.
\end{rmk}

\subsection{Homotopy fibers of localized model categories}
We will use the homotopy fibered product model structure to describe the
homotopy fiber of Bousfield localizations. We can then use this to describe
the layers of a Postnikov tower, among other examples.

Let $\C$ be a left proper pointed combinatorial model category and let
$\calS$ be a set of morphisms in $\C$. The identity $\C\to L_\calS\C$ is a
left Quillen functor and thus we have a pullback diagram of model categories
$L^{\calS}_{\bullet}\C\colon \mathcal{I}^{\rm op}\to \CAT$, where
$\mathcal{I}=1\leftarrow 0\rightarrow 2$, and $L^\calS_0\C=L_\calS\C$,
$L^\calS_1\C=*$ and $L^\calS_2\C=\C$. (Here $*$ denotes the category with one
object and one identity morphism with the trivial model structure.)

A section of $L^{\calS}_\bullet\C$ is a diagram $*\to Y\leftarrow X$ in $\C$
where $*$ denotes the zero object. There is an adjunction
$$
\xymatrix@C-=0.5cm{
{\rm const}:\C \ar@<3pt>[r] & \ar@<1pt>[l]
\Sect(\mathcal{I}, L^{\calS}_\bullet\C):{ev_2},
}
$$
where ${\rm const}(X)=(*\to X\stackrel{1}{\leftarrow}X)$ and $ev_2(*\to
Y\leftarrow X)=X$. We will denote $\Fibprod(L_{\bullet}^{\calS})$ by
$\Hofib(L_{\bullet}^{\calS})$ and we will call it the \emph{homotopy fiber}
of the Quillen pair $\C\rightleftarrows L_{\calS}\C$.
\begin{definition}
Let $\C$ be a proper pointed combinatorial model category and let $\K$ be a
set of objects and $\calS$ be a set of morphisms in $\C$. We say that the
colocalized model structure $C_\K\C$ and the localized model structure
$L_\calS\C$ are \emph{compatible} when for every object $X$ in $\C$, $X$ is
$\K$-colocal if and only if $X$ is cofibrant in $\C$ and the map $*\to X$ is
an $\calS$-local equivalence.
\end{definition}

The stable case is discussed in detail in \cite[Section 10]{BR14} where such
model structures are called ``orthogonal''; see also Section~\ref{sec:orthogonal}.

\begin{rmk}
\label{rmk:compatibility} Note that if $C_\K\C$ and $L_\calS\C$ are
compatible, then it follows from the definitions that $*\to Y\leftarrow X$ is
cofibrant in $\Hofib(L^{\calS}_\bullet\C)$ if and only if both $X$ and $Y$ are
$\K$-colocal and cofibrant in $\C$. If $*\to Y\leftarrow X$ is moreover
fibrant in $\Hofib(L^{\calS}_\bullet\C)$, then $Y$ is weakly contractible
since $Y$ is $\calS$-local and $*\to Y$ is an $\calS$-equivalence and $X\to
Y$ is a fibration in $\C$.
\end{rmk}

\begin{theorem}
Let $\C$ be a proper pointed combinatorial model category and let $\K$ be a
set of objects and $\calS$ be a set of morphisms in $\C$. If $C_\K\C$ and
$L_\calS\C$ are compatible, then the adjunction
$$
\xymatrix@C-=0.5cm{
{\rm const}: C_\K\C \ar@<3pt>[r] & \ar@<1pt>[l]
\Hofib(L^{\calS}_\bullet\C):{ev_2}
}
$$
is a Quillen equivalence. \label{thm:hofib_Quillen_equiv}
\end{theorem}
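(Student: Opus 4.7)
The plan is to verify that $({\rm const},ev_2)$ is a Quillen pair and then conclude it is a Quillen equivalence by establishing that $\mathbb{L}{\rm const}\colon\Ho(C_\K\C)\to\Ho(\Hofib(L^{\calS}_\bullet\C))$ is both fully faithful (via the derived unit) and essentially surjective (via the counit at bifibrant objects); this is equivalent to the Quillen equivalence condition of \cite[Proposition 1.3.13]{Hov99}. The key input throughout is Remark~\ref{rmk:compatibility}, which under the compatibility hypothesis aligns cofibrancy in $C_\K\C$ with cofibrancy of the level-$2$ slot of cofibrant sections in $\Hofib(L^{\calS}_\bullet\C)$.

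For the Quillen pair I would follow the pattern of Lemma~\ref{lem:post_quillen_pair}: by \cite[Proposition 8.5.4(2)]{Hir03} it suffices to show that ${\rm const}$ preserves trivial cofibrations and cofibrations between cofibrant objects. Since right Bousfield localizations do not alter fibrations, trivial cofibrations in $C_\K\C$ (respectively $\Hofib(L^{\calS}_\bullet\C)$) coincide with those in $\C$ (respectively $\Sect(\mathcal{I},L^{\calS}_\bullet\C)$), so the first condition reduces to the fact that ${\rm const}$ of a trivial cofibration is levelwise trivial. For cofibrations between cofibrant objects, Remark~\ref{rmk:compatibility} is exactly what ensures that ${\rm const}$ sends cofibrant objects of $C_\K\C$ to cofibrant sections in $\Hofib(L^{\calS}_\bullet\C)$, so that \cite[Proposition 3.3.16(2)]{Hir03} reduces the check to the underlying section categories, where it is clear.

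For the derived unit at a cofibrant $A\in C_\K\C$, take a fibrant replacement ${\rm const}(A)\to(\ast\to Y\leftarrow A')$ in $\Hofib(L^{\calS}_\bullet\C)$; it is built from a trivial cofibration, which coincides with a trivial cofibration in $\Sect(\mathcal{I},L^{\calS}_\bullet\C)$ and hence is levelwise, so $A\to A'=ev_2(\ast\to Y\leftarrow A')$ is a trivial cofibration in $\C$, in particular a weak equivalence in $C_\K\C$. For essential surjectivity of $\mathbb{L}{\rm const}$, any object of $\Ho(\Hofib(L^{\calS}_\bullet\C))$ is represented by a bifibrant section $B=(\ast\to Y\leftarrow X)$. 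Remark~\ref{rmk:compatibility} yields that $X=ev_2(B)$ is $\K$-colocal and cofibrant in $\C$, hence cofibrant in $C_\K\C$, so ${\rm const}(X)\to B$ is a map between cofibrant sections of $\Hofib(L^{\calS}_\bullet\C)$. By Proposition~\ref{prop:model_str_hopull}(iii) it is a weak equivalence if and only if it is a levelwise weak equivalence, which is immediate: level $2$ is the identity and level $0$ is the $\calS$-local equivalence $X\to Y$ witnessing that $B$ is homotopy cartesian. Hence $\mathbb{L}{\rm const}(X)\cong B$ in $\Ho(\Hofib(L^{\calS}_\bullet\C))$.

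The main obstacle I anticipate is precisely the role of compatibility: it is exactly the hypothesis needed to match cofibrancy in $C_\K\C$ against cofibrancy at the level-$2$ component of sections in $\Hofib(L^{\calS}_\bullet\C)$, and without it neither the preservation of cofibrations between cofibrant objects nor the essential surjectivity step would go through. Once this bridge is in place, the rest of the argument is a direct application of the right Bousfield localization formalism combined with the bifibrant-counit computation enabled by Proposition~\ref{prop:model_str_hopull}(iii).
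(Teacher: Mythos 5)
Your proposal is correct and follows essentially the same route as the paper: the Quillen-pair check via \cite[Proposition 8.5.4(2)]{Hir03} and \cite[Proposition 3.3.16(2)]{Hir03} using Remark~\ref{rmk:compatibility}, the derived unit via the fact that a fibrant replacement of ${\rm const}(A)$ is levelwise (the paper builds the explicit replacement $\ast\to L_\calS X\leftarrow X'$, but your more abstract observation that trivial cofibrations in $\Hofib(L^{\calS}_\bullet\C)$ agree with the levelwise ones of $\Sect(\mathcal{I},L^{\calS}_\bullet\C)$ gives the same conclusion), and the counit at a bifibrant section reduced to the $\calS$-local equivalence $X\to Y$ by Proposition~\ref{prop:model_str_hopull}(iii). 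The only cosmetic difference is that you phrase the counit step as essential surjectivity of $\mathbb{L}{\rm const}$, which amounts to the same computation.
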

\begin{proof}
We will first show that the adjunction is a Quillen pair. By \cite[Propostion~8.5.4(2)]{Hir03}, it is
enough to check that the left adjoint preserves trivial cofibrations and
sends cofibrations between cofibrant objects to cofibrations.

Let $f$ be a trivial cofibration in $C_\K\C$. Then $f$ is a trivial
cofibration in $\C$ and therefore ${\rm const}(f)$ is a trivial cofibration
in $\Sect(\mathcal{I}, L^{\calS}_\bullet\C)$ and thus a trivial
cofibration in $\Hofib(L^\calS_\bullet\C)$.

Now let $f\colon X\to Y$ be a cofibration between cofibrant objects in
$C_\K\C$. Then $f$ is a cofibration between cofibrant objects in $\C$ and
hence ${\rm const}(f)$ is also a cofibration between cofibrant objects in
$\Sect(\mathcal{I}, L^{\calS}_\bullet\C)$. But ${\rm const}(X)$ and
${\rm const}(Y)$ are cofibrant in $\Hofib(L^\calS_\bullet\C)$, since $C_\K\C$
and $L_\calS\C$ are compatible and therefore the maps $*\to X$ and $*\to Y$
are $\calS$-local equivalences. Hence ${\rm const}(f)$ is a cofibration in
$\Hofib(L^\calS_\bullet\C)$, by \cite[Proposition 3.3.16(2)]{Hir03}.

To prove that it is a Quillen equivalence, it suffices to show that the
derived unit and counit are weak equivalences; see \cite[Proposition
1.3.13]{Hov99}. Let $X$ be a cofibrant object in $C_\K\C$. Then we can
construct a fibrant replacement for ${\rm const}(X)$ in
$\Hofib(L^\calS_\bullet\C)$ as follows:
$$
\xymatrix{
\ast\ar[r]\ar@{=}[d] & X \ar@{=}[r] \ar@{>->}[d] & X \ar@{>->}[d] \\
\ast\ar[r] & L_\calS X & X', \ar@{->>}[l]
}
$$
where the map $X\to L_\calS X$ is a trivial cofibration in $L_\calS\C$ and
$X\to X'\to L_\calS X$ is a factorization in $\C$ of the previous map as a
trivial cofibration followed by a fibration.  Indeed, the map between the two
sections is a trivial cofibration in $\Hofib(L^\calS_\bullet\C)$ since it is
a levelwise trivial cofibration, and $*\to L_\calS X\leftarrow X'$ is fibrant
in $\Hofib(L^\calS_\bullet\C)$ since $L_\calS X$ is fibrant in $L_\calS\C$,
$X'$ is fibrant in $\C$ and $X'\to L_\calS X$ is a fibration in $\C$.

Therefore the map $X\to ev_2({\rm const}(X))\to ev_2(R({\rm const}(X)))$,
where $R$ denotes fibrant replacement in $\Hofib(L^\calS_\bullet\C)$, is
precisely the map $X\to X'$, which is a weak equivalence in $C_\K\C$ since it
was already a weak equivalence in $\C$.

Finally, let $*\to Y\leftarrow X$ be a fibrant and cofibrant section in
$\Hofib(L^\calS_\bullet\C)$. We need to check that the composite
$$
{\rm const}(Q(ev_2(*\to Y\leftarrow X)))\longrightarrow {\rm const}(ev_2(*\to Y\leftarrow X))
\longrightarrow (*\to Y\leftarrow X)
$$
is a weak equivalence in $\Hofib(L^\calS_\bullet\C)$. But $ev_2(*\to
Y\leftarrow X)=X$ is already cofibrant in $C_\K\C$, by
Remark~\ref{rmk:compatibility}. Therefore, we need to show that the map of
sections
$$
\xymatrix{
\ast\ar[r]\ar@{=}[d] & X\ar[d] \ar@{=}[r] & X \ar@{=}[d] \\
\ast \ar[r] & Y & X\ar[l]
}
$$
is a weak equivalence in $\Hofib(L^\calS_\bullet\C)$. Since both sections are
cofibrant, it is enough to see that the map in the middle is a weak
equivalence in $L_\calS\C$, which follows again from
Remark~\ref{rmk:compatibility}.
\end{proof}

\subsection{Postnikov sections and connective covers of simplicial sets}

We can use this setup to describe the ``layers'' of Postnikov towers. Let
$\sset_*$ denote the category of pointed simplicial sets. Consider the model
structure $P_k\sset_*=L_\calS\sset_*$ for $k$\nobreakdash-types,  that is,
the left Bousfield localization of $\sset_*$ with respect to the set of inclusions
$\calS=\{S^{k+1}\to D^{k+2}\}$. If $\K=\{S^{k+1}\}$, then the right Bousfield localization
$C_k\sset_*=C_{\K}\sset_*$ is the model structure for $k$-connective covers, and $P_k\sset_*$ and $C_k\sset_*$ are compatible, since for every $X$ there is a fiber sequence
$$
C_k X\longrightarrow X\longrightarrow P_k X,
$$
where $C_k X$ denotes the $k$th connective cover of $X$. By
Theorem~\ref{thm:hofib_Quillen_equiv} the model categories $C_k\sset_*$ and
$\Hofib(L^\calS_{\bullet}\sset_*)$ are Quillen equivalent.

Let $\calS=\{S^{n+1}\to D^{n+2}\}$ and $\K=\{S^{n+1}\}$, as before, and let $\C$ be a proper combinatorial model category. Then we define $L_\calS\C$ as the left Bousfield localization of $\C$ with respect to the set $I_{\C}\square\calS$ and $C_\K\C$ as the right Bousfield localization of $\C$ with respect to $\mathcal{G}_{\C}\otimes \K$. Here $I_{\C}$ is the set of generating cofibrations of $\C$, $\mathcal{G}_{\C}$ is a set of homotopy generators, $\otimes$ denotes the simplicial action given by a framing and $\square$ the pushout product. A fuller account of localized model structures along Quillen bifunctors can be found in~\cite{GR14a}. In general, $L_\calS\C$ and $C_\K\C$ are not necessarily compatible, so Theorem~\ref{thm:hofib_Quillen_equiv} will not hold in this case for arbitrary $\C$. However, examples where compatibility holds include the category of chain complexes $\Ch_b(R)$ and stable localizations; see Section \ref{sec:orthogonal}.

We can also consider $\Hofib(L^\calS_{\bullet}P_{k+1}\sset_*)$. Since for
every $X$ we have a fibration
$$
K(\pi_{k+1}X, k+1)\longrightarrow P_{k+1}X\longrightarrow P_k X,
$$
the model structures $C_k P_{k+1}\sset_*$ and $P_k P_{k+1}\sset_*=P_k
\sset_*$ are compatible. Hence Theorem~\ref{thm:hofib_Quillen_equiv} directly
implies

\begin{corollary}
The model structures $C_k P_{k+1}\sset_*$ and
$\Hofib(L^\calS_{\bullet}P_{k+1}\sset_*)$ are Quillen equivalent.\qed
\end{corollary}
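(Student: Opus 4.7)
The plan is essentially a direct invocation of Theorem~\ref{thm:hofib_Quillen_equiv} applied to $\C = P_{k+1}\sset_*$, with the distinguished set of morphisms $\calS = \{S^{k+1}\to D^{k+2}\}$ and the set of objects $\K = \{S^{k+1}\}$. For these choices, $L_{\calS}(P_{k+1}\sset_*)$ coincides with $P_k P_{k+1}\sset_* = P_k\sset_*$, while $C_{\K}(P_{k+1}\sset_*) = C_k P_{k+1}\sset_*$, so the right Quillen model category and the homotopy fiber model category appearing in the statement are exactly the ones produced by Theorem~\ref{thm:hofib_Quillen_equiv} in this setting.

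The verification of the hypotheses of Theorem~\ref{thm:hofib_Quillen_equiv} splits into two routine checks. First, one needs $P_{k+1}\sset_*$ to be a proper, pointed, combinatorial model category. Pointedness and combinatoriality are inherited from $\sset_*$, and left properness passes through any left Bousfield localization of a left proper combinatorial model category; right properness of each $P_n\sset_*$ is the result of Bousfield already cited in Section~\ref{sec:postnikovtowers} as~\cite[Theorem~9.9]{Bou}. Second, one must verify compatibility of $C_k P_{k+1}\sset_*$ with $P_k\sset_*$, in the sense of the definition preceding Theorem~\ref{thm:hofib_Quillen_equiv}.

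Compatibility is exactly the content of the remark the authors make just before stating the corollary: the Postnikov fibration $K(\pi_{k+1}X, k+1) \to P_{k+1}X \to P_k X$ shows that an object $X$ that is cofibrant in $P_{k+1}\sset_*$ satisfies $P_k X \simeq *$ precisely when $X$ is $k$\nobreakdash-connected, and such $X$ are exactly the $\K$-colocal objects in $P_{k+1}\sset_*$ (being built by cells $S^{k+1}$ up to weak equivalence). Since the two conditions coincide, compatibility holds and Theorem~\ref{thm:hofib_Quillen_equiv} gives the desired Quillen equivalence. There is no serious obstacle here; the main work has already been done in producing the general homotopy-fiber theorem and in recording the Postnikov fibration sequence that furnishes compatibility, so the corollary is truly a specialization.
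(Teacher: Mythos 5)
Your proposal is correct and follows essentially the same route as the paper: the authors likewise deduce the corollary by specializing Theorem~\ref{thm:hofib_Quillen_equiv} to $\C = P_{k+1}\sset_*$ with $\calS=\{S^{k+1}\to D^{k+2}\}$ and $\K=\{S^{k+1}\}$, using the Postnikov fibration $K(\pi_{k+1}X,k+1)\to P_{k+1}X\to P_kX$ to establish compatibility of $C_kP_{k+1}\sset_*$ with $P_kP_{k+1}\sset_*=P_k\sset_*$. Your additional verification of the standing hypotheses (properness, pointedness, combinatoriality of $P_{k+1}\sset_*$) is a harmless elaboration of what the paper leaves implicit.
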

This means that we can view $C_k P_{k+1}\sset_*$ as the $k$th layer of the
Postnikov tower model structure. Note that $\Ho(C_kP_{k+1}\sset_*)$ is
equivalent to the category of abelian groups for $k\ge 1$.

\subsection{Nullifications and cellularizations of spectra}\label{sec:stablecompatible}
Let $\Sp$ be a suitable model structure for the category of spectra, for
instance, symmetric spectra and let $\calS$ be a single map $E\to *$. Then
$L_\calS\Sp=P_E\Sp$ is called the \emph{$E$-nullification} of $\Sp$ and
$C_E\Sp$ is called the \emph{$E$-cellularization} of $\Sp$. As follows from
\cite[Theorem~3.6]{Gut12} we have the following compatibility between
localized and colocalized model structures:
\begin{itemize}
\item[{\rm (i)}] If the induced map ${\rm Ho}(\Sp)(\Sigma^{-1}E,C_E X)\to
    {\rm Ho}(\Sp)(\Sigma^{-1}E, X)$ is injective for every $X$, then
    $C_E\Sp$ and $P_E\Sp$ are compatible.
\item[{\rm (ii)}] If the induced map ${\rm Ho}(\Sp)(E, X)\to {\rm
    Ho}(\Sp)(E, P_{\Sigma E}X)$ is the zero map for every $X$, then
    $C_E\Sp$ and $P_{\Sigma E}\Sp$ are compatible.
\end{itemize}

\subsection{Stable localizations and colocalizations}\label{sec:orthogonal}
Let $\C$ be a proper combinatorial stable model category and let $\G_{\Sp}$
denote a set of cofibrant homotopy generators for the model category of symmetric
spectra~$\Sp$. Recall that a set of \emph{homotopy generators} for a model
category $\C$ consists of a small full subcategory $\mathcal{G}_\C$ such that
every object of $\C$ is weakly equivalent to a filtered homotopy colimit of
objects of $\mathcal{G}_\C$, and that by \cite[Proposition 4.7]{Dug01} every
combinatorial model category has a set of homotopy generators that can be
chosen to be cofibrant.

A set of maps $\calS$ in a stable model category is said to be \emph{stable} if the class
of $\calS$-local objects is closed under suspension. Let $\calS$ be a stable set of morphisms in $\C$ and let $\K={\rm
cof}(\calS)$ be the set of cofibers of the elements of $\calS$. Then we have
that  ${\rm cof}(\calS\otimes \G_{\Sp})={\rm cof}(\calS)\otimes
\G_{\Sp}=\K\otimes \G_{\Sp}$, where $\otimes$ denotes the action of $\Sp$ on $\C$. Hence, by \cite[Proposition~10.3]{BR14} it
follows that $L_{\calS\otimes \G_{\Sp}}\C$ and $C_{\K\otimes\G_{\Sp}}\C$ are
compatible. Therefore, Theorem~\ref{thm:hofib_Quillen_equiv} readily implies the following fact.
\begin{corollary}
The model categories $C_{\K\otimes\G_{\Sp}}\C$ and
$\Hofib(L^{\calS\otimes\G_{\Sp}}_{\bullet}\C)$ are Quillen equivalent. \qed
\end{corollary}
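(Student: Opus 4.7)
The plan is to apply Theorem~\ref{thm:hofib_Quillen_equiv} directly to the left Bousfield localization $L_{\calS\otimes\G_{\Sp}}\C$ and the right Bousfield localization $C_{\K\otimes\G_{\Sp}}\C$; all the hard work has already been packaged in that theorem, so the only thing to verify is the compatibility hypothesis between these two model structures.

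First I would unpack the definitions. The set $\calS\otimes\G_{\Sp}$ is a stable set of morphisms in $\C$ because $\calS$ itself is stable and $\G_{\Sp}$ consists of cofibrant generators for $\Sp$; hence the $\calS\otimes\G_{\Sp}$-local objects are closed under suspension. In the stable setting the cofiber of $s\otimes G$ for $s\in\calS$ and $G\in\G_{\Sp}$ is naturally $\mathrm{cof}(s)\otimes G$, since the action $\otimes\colon\C\times\Sp\to\C$ coming from a stable framing preserves cofiber sequences in the $\C$-variable. Taking the total set of cofibers thus gives the identification
\[
\mathrm{cof}(\calS\otimes\G_{\Sp}) \;=\; \mathrm{cof}(\calS)\otimes\G_{\Sp} \;=\; \K\otimes\G_{\Sp},
\]
which is precisely the colocalizing set in play.

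Second, I would invoke~\cite[Proposition~10.3]{BR14}, whose conclusion in the stable combinatorial setting is exactly that the localized model structure $L_{\calS\otimes\G_{\Sp}}\C$ and the colocalized model structure $C_{\K\otimes\G_{\Sp}}\C$ are orthogonal, which in the terminology of Section~\ref{sec:orthogonal} is the same as being compatible: an object $X$ of $\C$ is $\K\otimes\G_{\Sp}$-colocal precisely when it is cofibrant and the map $\ast\to X$ is a $\calS\otimes\G_{\Sp}$-local equivalence.

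Finally, having verified compatibility, I apply Theorem~\ref{thm:hofib_Quillen_equiv} with the set of objects taken to be $\K\otimes\G_{\Sp}$ and the set of morphisms taken to be $\calS\otimes\G_{\Sp}$ to conclude that the adjunction
\[
\mathrm{const}\colon C_{\K\otimes\G_{\Sp}}\C \lradjunction \Hofib(L^{\calS\otimes\G_{\Sp}}_{\bullet}\C)\colon ev_2
\]
is a Quillen equivalence, which is the desired statement. No real obstacle is expected here; the only conceptual point worth double-checking is the cofiber identity $\mathrm{cof}(\calS\otimes\G_{\Sp})=\K\otimes\G_{\Sp}$, which relies on the fact that framings in a stable model category preserve homotopy cofiber sequences and is by now standard.
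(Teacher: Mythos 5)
Your proposal is correct and follows the paper's own argument exactly: establish the identification $\mathrm{cof}(\calS\otimes\G_{\Sp})=\mathrm{cof}(\calS)\otimes\G_{\Sp}=\K\otimes\G_{\Sp}$, invoke \cite[Proposition~10.3]{BR14} to obtain compatibility of $L_{\calS\otimes\G_{\Sp}}\C$ and $C_{\K\otimes\G_{\Sp}}\C$, and then apply Theorem~\ref{thm:hofib_Quillen_equiv}. The only difference is that you spell out why the cofiber identity holds (framings preserving cofiber sequences), which the paper asserts without comment.
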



\begin{thebibliography}{99}

\bibitem{AR} J. Ad\'{a}mek and J. Rosick\'{y}, \textit{Locally Presentable
    and Accessible Categories}, London Math. Soc. Lecture Note Ser.,
    vol.~189, Cambridge University Press, Cambridge, 1994.

\bibitem{BR14} D. Barnes and C. Roitzheim, Stable left and right Bousfield
    localisations, \textit{Glasg. Math. J.} {\bf 56} (2014), no. 1, 13--42.

\bibitem{BR} D. Barnes and C. Roitzheim, Homological localisation of model
    categories, \textit{Appl. Categ. Structures} {\bf 23} (2015), no. 3, 487--505.

\bibitem{Bar10} C. Barwick, On left and right model categories and left and
    right Bousfield localizations, \textit{Homology,  Homotopy Appl.} {\bf 12} (2010), no.\,2 , 245--320.

\bibitem{Beke} T. Beke, Sheafifiable homotopy model categories,
    \textit{Math. Proc. Camb. Phil. Soc.} {\bf 129} (2000), no. 3,  447--475.

\bibitem{Ber11} J. Bergner, Homotopy fiber products of homotopy theories,
   \textit{Israel J. Math.} {\bf 185} (2011), 389--411.

\bibitem{Ber12} J. Bergner, Homotopy limits of model categories and more
    general homotopy theories, \textit{Bull. Lond. Math. Soc.} {\bf 44}
    (2012), no. 2, 311--322.

\bibitem{BCR07} G. Biedermann, B. Chorny and O. R\"ondigs, Calculus of functors and model categories,
\textit{Adv. Math.} {\bf 214} (2007) 92--115.

\bibitem{Bou79} A. K. Bousfield, The localization of spectra with respect to
    homology,
 \textit{Topology} {\bf 18} (1979), no. 4, 257--281.

\bibitem{Bou} A. K. Bousfield, On the telescopic homotopy theory of spaces,
    \textit{Trans. Amer. Math. Soc.} {\bf 353} (2001), no. 6, 2391--2426.

\bibitem{Dug01} D. Dugger, Combinatorial model categories have presentations,
    \textit{Adv. Math.} {\bf 164} (2001), 177--201.

\bibitem{Dug06} D. Dugger, Spectral enrichments of model categories,
\textit{Homology, Homotopy Appl.} {\bf }8 (2006), no. 1, 1--30.

\bibitem{Dwyer} W. G. Dwyer, Localizations,  \textit{Axiomatic, enriched and
    motivic homotopy theory}, 3--28, NATO Sci. Ser. II Math. Phys. Chem.,
    131, Kluwer Acad. Publ., Dordrecht, 2004.

\bibitem{GU} P. Gabriel and F. Ulmer, \textit{Lokal pr\"asentierbare
    Kategorien}, Lecture Notes in Math., vol.~221, Springer-Verlag, Berlin,
    Heidelberg, 1971.

\bibitem{GJ99} P. G. Goerss and J. F. Jardine, \textit{Simplicial Homotopy
    Theory}, Progress in Math., vol.~174, Birkh\"auser, Basel, 1999.

\bibitem{GS13} J. P. C. Greenlees and B. Shipley, Homotopy theory of modules
    over diagrams of rings, \textit{Proc. Amer. Math. Soc. Ser. B.} {\bf 1} (2014), 89--104.

\bibitem{Gut12} J. J. Guti\'errez, Cellularization of structures in stable
    homotopy categories, \textit{Math. Proc. Cambridge Philos. Soc.} {\bf
    153} (2012), 399--418.

\bibitem{GR14a} J. J. Guti\'errez and C. Roitzheim, Bousfield localisations
    along Quillen bifunctors, \texttt{arXiv:1411.0500}, 2014.

\bibitem{Hir03}  P. S. Hirschhorn, \textit{Model Categories and their
    Localizations}, Math. Surveys and Monographs, vol.~99, Amer. Math. Soc.,
    Providence, 2003.

\bibitem{Hov99} M. Hovey, \textit{Model Categories}, Math. Surveys and
    Monographs, vol.~63, Amer. Math. Soc., Providence, 1999.

\bibitem{HSS} M. Hovey, B. Shipley, and J. H. Smith, Symmetric spectra,
    \textit{J. Amer. Math. Soc.} {\bf 13} (2000), no.\,1, 149--208.

\bibitem{Len12} F. Lenhardt, Stable frames in model categories,
\textit{J. Pure Appl. Algebra} {\bf 216} (2012), no. 5, 1080--1091.

\bibitem{Lu} J. Lurie, \textit{Higher Topos Theory}, Annals of Math. Studies,
    vol.~170, Princeton University Press, Princeton, 2009.

\bibitem{MP} M. Makkai and R. Par\'e, \textit{Accessible Categories: The
    Foundations of Categorical Model Theory}, Contemp. Math., vol.~104, Amer.
    Math. Soc., Providence, 1989.

\bibitem{Rav84} D. C. Ravenel, Localization with respect to certain periodic
    homology theories, \textit{Amer. J. Math.} {\bf 106} (1984), no. 2,
    351--414.

\bibitem{Rav92} D. C. Ravenel, \textit{Nilpotence and Periodicity in Stable
    Homotopy Theory}, Annals of Math. Studies, vol.~128, Princeton University
    Press, Princeton, 1992.

\bibitem{Toe06} B. To\"en, Derived Hall algebras, \textit{Duke Math. J.} {\bf
    135} (2006), no. 3, 587--615.

\end{thebibliography}
\end{document}